\newcommand{\norm}[1]{\left\lVert #1 \right\rVert}
\newcommand{\abs}[1]{\left|#1\right|}
\newcommand*\myat{{\fontfamily{ptm}\selectfont @}}
\DeclareMathOperator{\curl}{\textnormal{curl}\,}
\renewenvironment{proof}{{\bfseries Proof.}}{\qed} 
\newtheorem{theorem}{Theorem}
\newtheorem{definition}[theorem]{Definition}
\numberwithin{equation}{section}
\numberwithin{theorem}{section}
\begin{document}
\title{Eigenvalue Problems in Inverse Electromagnetic Scattering Theory}
\author{S. Cogar\thanks{Department of Mathematical Sciences, University of Delaware, Newark, DE 19716 ({\color{blue}cogar\myat udel.edu}, {\color{blue}colton\myat udel.edu}).} , D. Colton\footnotemark[1] , and P. Monk\thanks{Corresponding author. Department of Mathematical Sciences, University of Delaware, Newark, DE 19716 ({\color{blue}monk\myat udel.edu}).}}
\maketitle

\begin{abstract}

The inverse electromagnetic scattering problem for anisotropic media in general does not have a unique solution. A possible approach to this problem is through the use of appropriate ``target signatures," i.e. eigenvalues associated with the direct scattering problem that are accessible to measurement from a knowledge of the scattering data. In this paper we shall consider three different sets of eigenvalues that can be used as target signatures: 1) eigenvalues of the electric far field operator, 2) transmission eigenvalues and 3) Stekloff eigenvalues.

\end{abstract}

\section{Introduction}

An important unresolved problem in electromagnetic inverse scattering theory is how to detect flaws or changes in the constitutive parameters in an inhomogeneous anisotropic medium. Such a problem presents itself, for example, in efforts to detect structural changes in airplane canopies due to prolonged exposure to ultraviolet radiation and is currently resolved by simply discarding canopies every few months and replacing them with new ones. The difficulties in using electromagnetic waves to interrogate anisotropic media is due to the fact that the corresponding inverse scattering problem no longer has a unique solution even if multiples frequencies and multiple sources are used \cite{gylys-colwell}. Hence alternate approaches to the nondestructive testing of anisotropic materials need to be developed. \par

A possible approach to the target identification problem for anisotropic materials is through the use of appropriate ``target signatures," i.e. eigenvalues associated with the direct scattering problem that are accessible to measurement from a knowledge of the scattering data. The earliest attempt to do this was based on the use of so-called ``scattering resonances" corresponding to the complex poles of the scattering operator. Such an approach appeared particularly fruitful since there is a deep and well-developed theory of such resonances that is readily available to the practitioner \cite{zworski}. However, the use of scattering resonances as target signatures ultimately proved unsuccessful in electromagnetic interrogation due to the difficulty in determining the location of the complex resonances from measured scattering data which is known only for real values of the wave number. \par

A more recent effort to determine appropriate target signatures for anisotropic materials is based on the use of transmission eigenvalues \cite{cakoni_colton_haddar,cakoni_colton_monk}. As opposed to scattering resonances, for dielectrics these eigenvalues are real and can be readily determined from the scattering data. In view of their potential in the nondestructive testing of dielectric materials, we will present the basic theory of transmission eigenvalues in the next two sections of our paper and refer the reader to the two monographs \cite{cakoni_colton_haddar} and \cite{cakoni_colton_monk} for further details. In contrast to the theory of scattering resonances, the theory of transmission eigenvalues is of more recent origin with many questions unanswered. In particular, it has been shown in special cases that complex transmission eigenvalues exist for dielectric materials but whether such eigenvalues exist in general and what their physical meaning is remains an open question. \par

There are two main problems with using transmission eigenvalues as target signatures. The first of these is that such an approach is only applicable to dielectric materials. The second is that one must interrogate the material over a range of frequencies centered at a transmission eigenvalue, i.e. one is forced to use multi-frequency data over a predetermined range of frequencies. A method to overcome both of these difficulties has recently been proposed that is based on using a modified far field operator instead of the standard far field operator that is used to determine both scattering resonances and transmission eigenvalues. In this new approach, the frequency is held fixed and a new artificial eigenparameter is introduced which can be determined from measured scattering data. In one version of this approach the new artificial eigenparameter turns out to be an electromagnetic version of the classical Stekloff eigenvalue problem for elliptic equations and we will discuss this specific class of target signatures in Section 3 of this paper \cite{stek1,stek2}.

\section{Transmission Eigenvalues}

We begin by formulating the direct electromagnetic scattering problem that we will refer to throughout this paper. Let $E^i, H^i$ be an incident field that is scattered by an inhomogeneous object occupying the domain $D$, where we assume that $D$ has smooth boundary $\partial D$. The corresponding scattered field is denoted by $E^s, H^s$ and $E = E^i + E^s, H = H^i + H^s$ is the total field. Then the (normalized) Maxwell's equations are
\begin{equation} \arraycolsep=1.4pt\def\arraystretch{1.5} \begin{array}{r} \curl E - ikH = 0 \\ \curl H + ikN(x)E = 0 \end{array} \quad\text{ in } \mathbb{R}^3 \label{maxwell} \end{equation}
where $k>0$ is the wave number, $x\in\mathbb{R}^3$, $N(x)$ is the symmetric matrix index of refraction with entries in $C^1(\overline{D})$ and $E^s, H^s$ satisfy the Silver-M{\"u}ller radiation condition
\begin{equation} \lim_{r\to\infty} (H^s\times x - rE^s) = 0 \label{smrc} \end{equation}
where $r = \abs{x}$. We will assume that the incident field $E^i, H^i$ is given by
\begin{equation} \arraycolsep=1.4pt\def\arraystretch{1.5} \begin{array}{rclcl} E^i(x) &=& E^i(x;d,p) &=& \frac{i}{k}\curl\curl pe^{ikx\cdot d} \\
								   H^i(x) &=& H^i(x;d,p) &=& \curl pe^{ikx\cdot d} \end{array} \label{incident} \end{equation}
where $d\in\mathbb{R}^3$, $\abs{d}=1$, is the direction of the incident wave and $p\in\mathbb{R}^3$ is the polarization. Under the assumption that
\begin{equation} \arraycolsep=1.4pt\def\arraystretch{1.5} \begin{array}{l} \overline{\xi}\cdot\textnormal{Re} N(x)\xi \ge \alpha\abs{\xi}^2 \\
                                    \overline{\xi}\cdot\textnormal{Im} N(x)\xi \ge 0 \end{array} \label{N_conditions} \end{equation}
for $x\in D$, $\xi\in\mathbb{C}^3$ and some constant $\alpha>0$ it can be shown that there exists a unique solution $E,H\in H_{loc}(\curl,\mathbb{R}^3)$ of \eqref{maxwell}--\eqref{incident} \cite{monk}. \par

From \eqref{maxwell}--\eqref{incident} it is easy to show \cite{colton_kress} that the scattered electric field $E^s(x) = E^s(x;d,p)$ has the asymptotic behavior
\begin{equation} E^s(x;d,p) = \frac{e^{ik\abs{x}}}{\abs{x}}\left\{ E_\infty(\hat{x};d,p) + O\left(\frac{1}{\abs{x}}\right)\right\} \label{Es_asymptotics} \end{equation}
as $\abs{x}\to\infty$ where $\hat{x} = \frac{x}{\abs{x}}$ and $E_\infty$ is the \emph{far field pattern} of the scattered wave. If we define
\begin{equation*} L_t^2(\mathbb{S}^2) := \{g \colon \mathbb{S}^2\to\mathbb{C}^3 : g\in L^2(\mathbb{S}^2), g\cdot\nu = 0\}, \end{equation*}
where $\mathbb{S}^2$ is the unit sphere with unit outward normal $\nu$, the \emph{electric far field operator} $F_e \colon L_t^2(\mathbb{S}^2)\to L_t^2(\mathbb{S}^2)$ is given by
\begin{equation} (F_e g)(\hat{x}) := \int_{\mathbb{S}^2} E_\infty(\hat{x};d,g(d))\, ds(d). \label{effo} \end{equation}
It can easily be seen that $F_e$ is compact \cite{colton_kress}. \par

Of central importance to the inverse scattering problem is the characterization of the null space of the electric far field operator. To this end we define an \emph{electromagnetic Herglotz pair} $(E,H)$ to be a solution of Maxwell's equations
\begin{equation} \arraycolsep=1.4pt\def\arraystretch{1.5} \begin{array}{r} \curl E - ikH = 0 \\ \curl H + ikE = 0 \end{array} \label{maxwell_1} \end{equation}
of the form
\begin{equation} \arraycolsep=1.4pt\def\arraystretch{2} \begin{array}{l} E(x) := \displaystyle\int_{\mathbb{S}^2} E^i(x;d,g(d))\, ds(d) \\
                                  H(x) := \displaystyle\int_{\mathbb{S}^2} H^i(x;d,g(d))\, ds(d) \end{array} \label{herglotz} \end{equation}
with kernel $g\in L_t^2(\mathbb{S}^2)$. The proof of the following theorem can be found in \cite{cakoni_colton_monk}.

\begin{theorem} \label{th_effo} The electric far field operator $F_e \colon L_t^2(\mathbb{S}^2)\to L_t^2(\mathbb{S}^2)$ corresponding to the scattering problem \eqref{maxwell}--\eqref{incident} is injective with dense range if and only if there does not exist a nontrivial solution to the \textnormal{transmission eigenvalue problem}
\begin{equation} \begin{array}{c}\left. \begin{array}{c} \curl\curl E - k^2N(x) E = 0 \\
                                                   \curl\curl E_0 - k^2 E_0 = 0 \end{array} \right\}\text{ in } D \\\\
                                                   
                                 \left. \begin{array}{c} \nu\times E = \nu\times E_0 \\ 
                                   \nu\times \curl E = \nu\times \curl E_0 \end{array} \right\}\quad\quad\text{ on } \partial D \end{array} \label{etep} \end{equation}
where $\nu$ is the outward unit normal to $\partial D$ and $E_0 := E_g, H_0 := H_g$ are an electromagnetic Herglotz pair with kernel $ikg$. 
                                                                      
\end{theorem}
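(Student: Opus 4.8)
The plan is to reduce the two-sided statement to a single injectivity question and then to identify the kernel of $F_e$ with transmission eigenfunctions. By linearity of the scattering problem and the definition \eqref{effo}, the function $F_e g$ is precisely the electric far field pattern of the scattered field $E^s$ produced when the incident field is the electromagnetic Herglotz pair $(E_0,H_0)$ of \eqref{herglotz} with kernel $g$. To dispose of the ``dense range'' half of the equivalence, I would invoke the reciprocity relation for the electric far field pattern, $p\cdot E_\infty(\hat x;d,q)=q\cdot E_\infty(-d;-\hat x,p)$, which identifies the adjoint $F_e^\ast$ with $RF_eR$, where $R$ is the conjugate reflection $(Rg)(d):=\overline{g(-d)}$. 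Since $R$ is a bijection of $L_t^2(\mathbb{S}^2)$, one gets $\ker F_e^\ast=\{0\}$ if and only if $\ker F_e=\{0\}$, and as $F_e$ has dense range exactly when $F_e^\ast$ is injective, the problem reduces to characterizing $\ker F_e=\{0\}$ alone.

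For the injectivity direction, suppose $F_e g=0$, so the far field pattern of the corresponding scattered field $E^s$ vanishes. By Rellich's lemma together with unique continuation for Maxwell's equations, $E^s=0$ in $\mathbb{R}^3\setminus\overline D$, whence the total field coincides with the Herglotz field $E_0$ in the exterior. Because $E,H\in H_{loc}(\curl,\mathbb{R}^3)$ their tangential traces $\nu\times E$ and $\nu\times H$ are continuous across $\partial D$, and since $\curl E=ikH$ the trace $\nu\times\curl E$ is continuous as well. Restricting the total field to $D$ and recalling that the Herglotz field solves $\curl\curl E_0-k^2E_0=0$ everywhere, I obtain exactly the pair in \eqref{etep}: $\curl\curl E-k^2N(x)E=0$ and $\curl\curl E_0-k^2E_0=0$ in $D$, with $\nu\times E=\nu\times E_0$ and $\nu\times\curl E=\nu\times\curl E_0$ on $\partial D$.

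To guarantee this solution is nontrivial and to run the converse, the key auxiliary fact is the injectivity of the Herglotz operator $g\mapsto E_0$: a Herglotz field vanishing on $D$ continues analytically to vanish in all of $\mathbb{R}^3$, forcing $g=0$. Thus $g\neq0$ yields $E_0\neq0$ and a genuinely nontrivial transmission eigenfunction. Conversely, given a nontrivial pair $(E,E_0)$ solving \eqref{etep} with $E_0$ the Herglotz field of kernel $ikg$, I would glue $E$ on $D$ to $E_0$ on $\mathbb{R}^3\setminus\overline D$; the two transmission conditions guarantee that the tangential traces of the glued field and of its curl match across $\partial D$, so the result lies in $H_{loc}(\curl,\mathbb{R}^3)$ and solves \eqref{maxwell} for the incident field $E_0$. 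Its scattered part vanishes outside $D$, so its far field pattern is zero, i.e. $F_e g=0$ with $g\neq0$, and $F_e$ fails to be injective.

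I expect the main obstacle to be the ``dense range'' half, namely making the reciprocity argument rigorous in the vector setting, where the polarization structure makes the identification of $F_e^\ast$ with a reflected copy of $F_e$ more delicate than in the scalar case. A secondary technical point is the careful handling of traces in $H(\curl,D)$---the tangential traces live in $H^{-1/2}(\operatorname{div},\partial D)$---so that the boundary conditions in \eqref{etep} and the gluing in the converse are justified in the correct weak sense, and so that ``nontrivial'' is correctly pinned to $g\neq0$ through the injectivity of the Herglotz operator.
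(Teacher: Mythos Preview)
The paper does not supply its own proof of this theorem; it simply cites \cite{cakoni_colton_monk}. Your outline is precisely the standard argument found there (and in Colton--Kress): reduce dense range to injectivity via the electromagnetic reciprocity relation $q\cdot E_\infty(\hat x;d,p)=p\cdot E_\infty(-d;-\hat x,q)$, then characterize $\ker F_e$ by combining Rellich's lemma with trace continuity across $\partial D$ to land on \eqref{etep}, using injectivity of the Herglotz transform to ensure nontriviality, and glue for the converse. There is nothing to compare---your sketch \emph{is} the textbook proof, and the technical points you flag (the adjoint identification $F_e^\ast=RF_eR$ in the tangential-field setting, and the $H^{-1/2}(\mathrm{div},\partial D)$ trace framework) are exactly where the details live in the cited monograph.
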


Values of $k$ for which there exist nontrivial solutions to \eqref{etep} are called \emph{transmission eigenvalues}. Transmission eigenvalues play an important role in the theory of inverse scattering. In particular, as we shall see, these eigenvalues can be determined from the far field data and give qualitative information on the anisotropic index of refraction. As noted in the introduction, this is of particular importance in the inverse scattering problem for anisotropic media since the anisotropic material parameters are not uniquely determined from the far field data. The mathematical theory of transmission eigenvalues is based on the following two fundamental results due to Cakoni, Gintides, and Haddar \cite{cakoni_gintides_haddar} (see also \cite{cakoni_colton_monk}), where for real $N(x)$ we define
\begin{equation*} n_* := \inf_{x\in D} \inf_{\norm{\xi}=1} \overline{\xi}\cdot N(x)\xi, \quad n^* := \sup_{x\in D} \sup_{\norm{\xi}=1} \overline{\xi}\cdot N(x)\xi. \end{equation*}

\begin{theorem} \label{th_existence} Assume that for every $\xi\in\mathbb{C}^3$, $\abs{\xi} = 1$, and some constants $\alpha>0$, $\beta>0$ one of the following inequalities is valid:
\begin{enumerate}[label=\arabic*)]

\item $1+\alpha\le n_* \le \overline{\xi}\cdot N(x)\xi \le n^* < \infty$, $x\in D$,

\item $0<n_* \le \overline{\xi}\cdot N(x)\xi \le n^* \le 1-\beta$, $x\in D$.

\end{enumerate}

Then there exists an infinite countable set of positive transmission eigenvalues corresponding to \eqref{etep} with $+\infty$ as the only accumulation point.

\end{theorem}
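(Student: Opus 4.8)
The plan is to recast the transmission eigenvalue problem \eqref{etep} as a fourth-order problem for the single field $u := E - E_0$ and then to analyze the resulting generalized eigenvalue problem via the spectral theory of a self-adjoint operator family depending on $k$. First I would subtract the two volume equations: writing $E_0 = E - u$, the identities $\curl\curl E = k^2 N E$ and $\curl\curl E_0 = k^2 E_0$ combine to give $\curl\curl u - k^2 u = k^2(N-I)E$, while the interface conditions in \eqref{etep} reduce to the homogeneous boundary conditions $\nu\times u = 0$ and $\nu\times\curl u = 0$ on $\partial D$. Since $N-I$ is invertible in each case (the strict bounds $1+\alpha\le n_*$ or $n^*\le 1-\beta$ guarantee this), I can solve $E = k^{-2}(N-I)^{-1}(\curl\curl u - k^2 u)$ and substitute back into $\curl\curl E = k^2 N E$ to obtain
\[ \curl\curl\!\left[(N-I)^{-1}(\curl\curl u - k^2 u)\right] = k^2 N (N-I)^{-1}(\curl\curl u - k^2 u) \]
on the Hilbert space
\[ U(D) := \{u \in H_0(\curl, D) : \curl\curl u \in L^2(D),\ \nu\times\curl u = 0 \text{ on } \partial D\}. \]

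Second, I would multiply against a test field and integrate by parts twice to obtain a sesquilinear form $A_\tau(u,v)$, with $\tau := k^2$, whose dependence on $\tau$ is polynomial. Representing $A_\tau$ through the Riesz map as a bounded self-adjoint operator $\mathbb{A}_\tau$ on $U(D)$, the transmission eigenvalues become exactly the values $\tau>0$ at which $\mathbb{A}_\tau$ fails to be injective. Self-adjointness here uses that $N$ is real symmetric, as assumed throughout this existence theorem. The two structural facts I then need are: (i) the principal part $\int_D (N-I)^{-1}(\curl\curl u - \tau u)\cdot\overline{(\curl\curl v - \tau v)}\,dx$ is coercive on $U(D)$ — this is where hypotheses 1) and 2) enter, since they make $(N-I)^{-1}$ sign-definite (positive in Case 1, negative in Case 2, the latter requiring a harmless overall sign change); and (ii) the remaining $\tau$-dependent terms are compact perturbations, thanks to the compact embedding of the energy space into $L^2(D)$. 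This yields a decomposition $\mathbb{A}_\tau = \mathbb{C}_\tau + \mathbb{K}_\tau$ into a coercive part plus a compact part, so each $\mathbb{A}_\tau$ is Fredholm of index zero.

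Third, I would extract the countable spectrum in two stages. Discreteness with $+\infty$ as the only accumulation point follows from analytic Fredholm theory applied to the analytic (indeed polynomial) family $\tau\mapsto\mathbb{A}_\tau$, together with the fact that $\mathbb{A}_\tau$ is positive-definite, hence injective, for $\tau$ near $0$, so that the set of non-invertibility is discrete and locally finite. Existence and infinitude of such $\tau$ I would obtain by a comparison argument: replacing $N$ by a constant index equal to $n_*$ in Case 1 or $n^*$ in Case 2, and working on a ball for which the transmission eigenvalues can be computed explicitly by separation of variables using spherical vector wave functions, gives an infinite family of known eigenvalues. A monotonicity principle with respect to the index and the domain then forces the eigenvalues of $\mathbb{A}_\tau$ for the true medium to cross zero as $\tau$ increases, and each crossing produces a transmission eigenvalue; since the comparison medium supplies infinitely many such thresholds, infinitely many transmission eigenvalues result.

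The step I expect to be the main obstacle is the coercivity of the principal part on $U(D)$. In contrast to the scalar Helmholtz transmission problem, the curl-curl operator has an infinite-dimensional kernel consisting of all gradients, so the term $\int_D (N-I)^{-1}\curl\curl u\cdot\overline{\curl\curl v}\,dx$ controls only the solenoidal range and gives no direct control of $\norm{u}$ itself. Overcoming this will require a Helmholtz-type decomposition of $U(D)$ into divergence-free and gradient components, careful use of the boundary conditions built into $U(D)$, and Gaffney-type regularity and compactness estimates to recover equivalence with the full $U(D)$ norm. This is the technical heart of the electromagnetic case and the essential reason it is substantially harder than its acoustic analogue.
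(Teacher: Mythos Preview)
The paper does not supply a proof of this theorem; it attributes the result to Cakoni, Gintides, and Haddar \cite{cakoni_gintides_haddar} (see also \cite{cakoni_colton_monk}) and moves on. Your outline is a faithful sketch of precisely that cited argument: the fourth-order reformulation in $u=E-E_0$, the self-adjoint operator pencil $\tau\mapsto\mathbb{A}_\tau$ with $\tau=k^2$, Fredholm theory for discreteness, and the comparison with constant-index balls via a monotonicity/max--min principle for the existence of infinitely many eigenvalues are exactly the ingredients used there. Your identification of the main obstacle---the degeneracy of the principal part on gradient fields, which has no analogue in the scalar $H_0^2$ setting---is likewise accurate, and the resolution you anticipate (a Helmholtz-type splitting together with compactness estimates of Gaffney type) is essentially how the electromagnetic case is handled in those references. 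One small refinement: at $\tau=0$ the form genuinely vanishes on gradients, so coercivity holds only for each fixed $\tau>0$ (with constant degenerating as $\tau\downarrow 0$); the analytic Fredholm argument is therefore launched from some small positive $\tau_0$ rather than from $\tau=0$ itself, which is consistent with your phrase ``for $\tau$ near $0$'' read as $\tau>0$ small.
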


Note that, in contrast to scattering resonances, the above theorem says that for real $N(x)$ there exist positive transmission eigenvalues and, as we shall see in the next section, these can be determined from measured far field data and thus can be used as target signatures. It can be shown (c.f. Theorem 8.12 of \cite{colton_kress}) that if $N(x)$ is not real-valued then positive transmission eigenvalues do not exist.

\begin{theorem} \label{th_monotone} Let $k_{1,D,N(x)}$ be the first positive transmission eigenvalue for \eqref{etep} and let $\alpha$ and $\beta$ be positive constants. Denote by $k_{1,D,n_*}$ and $k_{1,D,n^*}$ the first positive transmission eigenvalue of \eqref{etep} for $N = n_*I$ and $N = n^*I$, respectively, and let $\norm{\cdot}_2$ denote the Euclidean operator norm.
\begin{enumerate}[label=\arabic*)]

\item If $\norm{N(x)}_2 \ge \alpha > 1$, then $0 < k_{1,D,n^*} \le k_{1,D,N(x)} \le k_{1,D,n_*}$.

\item If $0 < \norm{N(x)}_2 \le 1-\beta$, then $0 < k_{1,D,n_*} \le k_{1,D,N(x)} \le k_{1,D,n^*}$.

\end{enumerate}

\end{theorem}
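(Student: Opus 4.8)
The plan is to reduce the coupled system \eqref{etep} to a single fourth-order variational equation and to identify the first transmission eigenvalue as the minimum, over an energy space, of an explicit scalar function of $\tau := k^2$ whose dependence on $N$ is carried entirely by a single positive-definite matrix weight. First I would set $w := E - E_0$, which by the transmission conditions in \eqref{etep} lies in $U(D) := \{u \in H_0(\curl,D) : \curl\curl u \in L^2(D),\ \nu\times u = \nu\times\curl u = 0 \text{ on } \partial D\}$. Using $\curl\curl E = \tau N E$ and $\curl\curl E_0 = \tau E_0$ to eliminate $E_0$ gives $(\curl\curl - \tau I)(N-I)^{-1}(\curl\curl - \tau N)w = 0$; its weak form $\int_D (N-I)^{-1}(\curl\curl w - \tau N w)\cdot\overline{(\curl\curl v - \tau v)}\,dx = 0$ for all $v \in U(D)$, tested with $v = w$ and simplified by the identity $(N-I)^{-1}(\curl\curl w - \tau N w) = (N-I)^{-1}(\curl\curl w - \tau w) - \tau w$ together with integration by parts (boundary terms vanishing on $U(D)$), reduces to
\[ \mathcal{A}^N_\tau(w,w) = \int_D (N-I)^{-1}(\curl\curl w - \tau w)\cdot\overline{(\curl\curl w - \tau w)}\,dx - \tau\int_D \abs{\curl w}^2\,dx + \tau^2\int_D \abs{w}^2\,dx. \]
This requires $N - I$ to be uniformly positive definite, the regime of hypothesis 1); for hypothesis 2) one eliminates $E$ instead, producing the companion form $\mathcal{B}^N_\tau$ with weight $(I-N)^{-1}$ and the two lower-order signs reversed. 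The decisive point is that $N$ enters $\mathcal{A}^N_\tau$ (resp.\ $\mathcal{B}^N_\tau$) only through the weight multiplying the nonnegative term $\abs{\curl\curl w - \tau w}^2$.

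Next I would invoke the variational framework underlying Theorem \ref{th_existence} of Cakoni, Gintides and Haddar: $\mathcal{A}^N_\tau$ is the form of a self-adjoint operator $T_\tau$ on $U(D)$ with $T_0$ coercive, so $T_\tau$ remains positive for $0 < \tau < \tau_1 := k_{1,D,N(x)}^2$ and first acquires a nontrivial kernel at $\tau_1$. Writing $\tau_N(w) \in (0,\infty]$ for the smallest positive zero of $g_w(\tau) := \mathcal{A}^N_\tau(w,w)$ (a function with $g_w(0) > 0$ and positive leading coefficient), this positivity yields the Courant-type identity $k_{1,D,N(x)}^2 = \min_{0 \ne w \in U(D)} \tau_N(w)$, attained on $\ker T_{\tau_1}$.

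The monotonicity now follows from L\"owner monotonicity of matrix inversion. From the pointwise bounds $n_* I \le N(x) \le n^* I$ with $n_* > 1$ one gets $\tfrac{1}{n^*-1}I \le (N-I)^{-1} \le \tfrac{1}{n_*-1}I$, hence $\mathcal{A}^{n^* I}_\tau(w,w) \le \mathcal{A}^N_\tau(w,w) \le \mathcal{A}^{n_* I}_\tau(w,w)$ for every $w$ and every $\tau \ge 0$, since only the weighted term differs. Because each $g_w$ opens upward and is positive at the origin, a pointwise-smaller such function vanishes no later, so $\tau_{n^* I}(w) \le \tau_N(w) \le \tau_{n_* I}(w)$ for all $w$; taking minima gives $0 < k_{1,D,n^*} \le k_{1,D,N(x)} \le k_{1,D,n_*}$, which is 1). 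Part 2) is identical with $\mathcal{B}^N_\tau$, except that $(I-N)^{-1}$ is \emph{increasing} in $N$, so all inequalities between the forms and between the first zeros reverse, giving $0 < k_{1,D,n_*} \le k_{1,D,N(x)} \le k_{1,D,n^*}$.

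The routine parts are the integration by parts and the sign bookkeeping. The step I expect to be the genuine obstacle is justifying the Courant-type identity $k_{1,D,N(x)}^2 = \min_w \tau_N(w)$ --- that is, the positivity of $T_\tau$ on all of $U(D)$ below the first eigenvalue and the attainment of the minimum --- which is the delicate content of the existence theory and which I would cite from Theorem \ref{th_existence} and \cite{cakoni_gintides_haddar} rather than reprove. One must also verify that the comparison media $n_* I$ and $n^* I$ fall in the same case as $N$ so that $k_{1,D,n_*}$ and $k_{1,D,n^*}$ are themselves first transmission eigenvalues; this holds because $n_* I \le N \le n^* I$ keeps all three media on the same side of $I$.
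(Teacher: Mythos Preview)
The paper does not actually prove this theorem; it states it as one of two ``fundamental results due to Cakoni, Gintides, and Haddar'' and refers the reader to \cite{cakoni_gintides_haddar} and \cite{cakoni_colton_monk}. Your proposal reconstructs precisely the argument found in those references: reduce \eqref{etep} to the fourth-order problem for $w=E-E_0$, write the associated quadratic form in which $N$ enters only through the weight $(N-I)^{-1}$ (respectively $(I-N)^{-1}$), and compare via the L\"owner ordering. So there is nothing to contrast --- your route \emph{is} the cited route.

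Two small remarks on execution. First, the characterization $k_{1,D,N(x)}^2=\min_{w\neq 0}\tau_N(w)$ is not quite how the cited references phrase it; they instead track the smallest eigenvalue $\lambda_1(\tau)$ of the generalized eigenvalue problem $A_\tau u=\lambda(\tau)Bu$ and use that $k_{1,D,N(x)}^2$ is the first $\tau$ at which $\lambda_1(\tau)$ crosses zero. Your ``first positive zero of $g_w$'' formulation is equivalent once one knows coercivity of $A_0$ and continuity of $\lambda_1(\cdot)$, but if you write this up you should make that equivalence explicit rather than assert the min-formula directly. Second, $g_w(0)=\int_D (N-I)^{-1}|\curl\curl w|^2\,dx$ vanishes on curl-free $w$, so strict positivity at the origin needs the restriction to $w$ with $\curl\curl w\neq 0$ (or passage to the appropriate quotient space); this is handled in the references but is worth flagging since you rely on $g_w(0)>0$ for the first-zero comparison.
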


Assuming that $k_{1,D,N(x)}$ can be computed from the far field measurements, Theorem \ref{th_monotone} provides an approach to obtaining qualitative information on $N(x)$ by computing a constant $n$ such that $k_{1,D,N(x)}$ is the first positive transmission eigenvalue corresponding to \eqref{etep} with $N := nI$ for this $n$. The above theorem then implies that $n_*\le n\le n^*$. Since $N(x)$ is positive definite, $n_* = \lambda_1$ and $n^* = \lambda_3$ where $\lambda_1$ is the smallest and $\lambda_3$ is the largest eigenvalue of $N(x)$. As an example, let $D = (0,1)\times(0,1)$ and
\begin{equation*} N = \left(\begin{array}{cc} 1/6 & 0 \\ 0 & 1/8 \end{array}\right). \end{equation*}
Then $\lambda_1 = 0.125$, $\lambda_3 = 0.166$ and the computed $n = 0.135$ \cite{cakoni_colton_monk_sun}.

\subsection{Measurement of Transmission Eigenvalues}

We will now consider the problem of determining transmission eigenvalues from the measured scattering data. In particular, we will assume that the index of refraction is real-valued and make use of Theorems \ref{th_effo} and \ref{th_existence}. To this end, we need the following theorem from \cite{colton_kress1993} (the result in \cite{colton_kress1993} assumed that $N(x)$ was a scalar but the same proof is valid for $N(x)$ a symmetric matrix satisfying the assumption \eqref{N_conditions}).

\begin{theorem} \label{th_effo_eigs} Let $E_g^i, H_g^i$ and $E_h^i, H_h^i$ be electromagnetic Herglotz pairs with kernels $g,h\in L_t^2(\mathbb{S}^2)$, respectively, and let $E_g$ and $E_h$ be the solutions of \eqref{maxwell}--\eqref{incident} with $E^i, H^i$ replaced by $E_g^i, H_g^i$ and $E_h^i, H_h^i$ respectively. Then
\begin{equation} k\iint_D \textnormal{Im} N(x) E_g\cdot\overline{E_h}\, dx = -2\pi(F_eg,h) - 2\pi(g,F_eh) - (F_eg,F_eh) \label{effo_eigs} \end{equation}
where $(\cdot,\cdot)$ denotes the inner product on $L_t^2(\mathbb{S}^2)$.

\end{theorem}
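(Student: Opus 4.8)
The plan is to derive \eqref{effo_eigs} from the second vector Green's theorem applied to $E_g$ and $\overline{E_h}$, together with the defining properties of the scattered field and its far field pattern. Throughout I write $E_g = E_g^i + E_g^s$ and $E_h = E_h^i + E_h^s$, where $E_g^i, E_h^i$ are the electromagnetic Herglotz fields of \eqref{herglotz} and $E_g^s, E_h^s$ are the radiating scattered fields, and I introduce the (antisymmetric) boundary form
\begin{equation*} B(U,V) := \int_{\partial D}\left[(\nu\times U)\cdot\curl V - (\nu\times V)\cdot\curl U\right]ds. \end{equation*}
Applying the second vector Green's theorem on $D$ to $E_g$ and $\overline{E_h}$, and using that inside $D$ one has $\curl\curl E_g = k^2 N E_g$ and, since $k$ is real, $\curl\curl\overline{E_h} = k^2\overline{N}\,\overline{E_h}$, the symmetry of $N$ collapses the volume integrand to $\curl\curl E_g\cdot\overline{E_h} - E_g\cdot\curl\curl\overline{E_h} = k^2 E_g\cdot(N-\overline{N})\overline{E_h} = 2ik^2\,\textnormal{Im}N\,E_g\cdot\overline{E_h}$. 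This gives
\begin{equation*} 2ik^2\iint_D \textnormal{Im}N(x)\,E_g\cdot\overline{E_h}\,dx = B(E_g,\overline{E_h}). \end{equation*}

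Since the tangential traces $\nu\times E_g$ and $\nu\times\curl E_g$ are continuous across $\partial D$, I may evaluate $B(E_g,\overline{E_h})$ from the exterior representation and expand bilinearly into the four terms $B(E_g^i,\overline{E_h^i})$, $B(E_g^s,\overline{E_h^s})$, $B(E_g^i,\overline{E_h^s})$ and $B(E_g^s,\overline{E_h^i})$. The entire--entire term vanishes: $E_g^i$ and $\overline{E_h^i}$ both satisfy $\curl\curl U = k^2 U$ on all of $\mathbb{R}^3$, so Green's theorem on $D$ forces $B(E_g^i,\overline{E_h^i}) = 0$. For the scattered--scattered term I would apply Green's theorem in the exterior region $\{R_0 < \abs{x} < R\}$, where both fields solve the homogeneous equation so that the volume term vanishes, reduce $B(E_g^s,\overline{E_h^s})$ to an integral over the sphere $\abs{x} = R$, and pass to the limit $R\to\infty$ using the far field asymptotics \eqref{Es_asymptotics}, the Silver-M{\"u}ller condition \eqref{smrc} (which supplies the far field of $\curl E^s$), and the tangentiality of the far field pattern; the outcome is $B(E_g^s,\overline{E_h^s}) = -2ik\,(F_eg,F_eh)$.

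The two mixed terms each pair a radiating field against an entire Herglotz field, and I would evaluate them with the Stratton--Chu far field representation formula (cf. \cite{colton_kress}), which expresses the far field pattern of a radiating solution as a boundary integral over $\partial D$; writing the Herglotz field as a superposition of plane waves and invoking reciprocity yields $B(E_g^s,\overline{E_h^i}) = -4\pi ik\,(F_eg,h)$ and $B(E_g^i,\overline{E_h^s}) = -4\pi ik\,(g,F_eh)$. Substituting the four evaluations into the identity above and dividing by $2ik$ then produces \eqref{effo_eigs}.

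The main obstacle is the pair of mixed terms: unlike the other two, they cannot be dispatched by a direct application of Green's theorem and require the correct vector far field representation together with careful bookkeeping of orientations, complex conjugates, and the factor $4\pi$ coming from the fundamental solution --- precisely the normalization that produces the coefficients $2\pi$ in \eqref{effo_eigs}. Some care is also needed in the scattered--scattered limit to confirm that the cross terms between the leading $1/\abs{x}$ behavior and the $O(1/\abs{x}^2)$ remainder vanish as $R\to\infty$, so that only the product of the leading far field terms survives.
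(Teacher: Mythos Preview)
The paper does not give its own proof of this theorem: it simply quotes the identity from \cite{colton_kress1993}, noting that the scalar proof there carries over verbatim to symmetric matrix-valued $N$ satisfying \eqref{N_conditions}. Your outline is correct and is precisely the standard Colton--Kress argument --- Green's second vector identity on $D$ to produce the $\textnormal{Im}\,N$ volume term, the four-way split $E=E^i+E^s$, vanishing of the entire--entire piece, the large-sphere limit for the scattered--scattered piece, and the far field representation plus reciprocity for the two mixed pieces --- so there is nothing materially different to compare.
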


If $\textnormal{Im}N(x) = 0$ then it is an easy consequence of this theorem that the compact operator $F_e$ is normal and hence has an infinite number of eigenvalues \cite{colton_kress1993}. In this case it can also easily be seen from \eqref{effo_eigs} that if $F_eg = \lambda g$ then
\begin{equation*} 0 = -2\pi(\lambda g,g) - 2\pi(g,\lambda g) - \abs{\lambda}^2(g,g) \end{equation*}
which implies that
\begin{equation} \abs{\lambda + 2\pi} = 2\pi \label{effo_eigs_circle} \end{equation}
i.e. the eigenvalues of the electric far field operator all lie on the circle \eqref{effo_eigs_circle}. A similar calculation can be done if, instead of using the electric far field operator, we use the \emph{magnetic far field operator}, i.e. if
\begin{equation} H^s(x;d,p) = \frac{e^{ik\abs{x}}}{\abs{x}}\left\{ H_\infty(\hat{x};d,p) + O\left(\frac{1}{\abs{x}}\right)\right\} \label{Hs_asymptotics} \end{equation}
and the magnetic far field operator $F_m \colon L_t^2(\mathbb{S}^2)\to L_t^2(\mathbb{S}^2)$ is defined by
\begin{equation} (F_mg)(\hat{x}) := \int_{\mathbb{S}^2} H_\infty(\hat{x};d,g(d))\, ds(d). \label{mffo} \end{equation}
It is again easily seen that $F_m$ is compact. In a similar manner to the electric far field operator, it can be shown that $F_m$ is injective with dense range provided $k$ is not an eigenvalue of the interior transmission problem
\begin{equation} \begin{array}{c} \begin{array}{c} \curl(N(x)^{-1}\curl H) - k^2 H = 0 \\
                                                   \curl\curl H_0 - k^2 H_0 = 0 \end{array} \text{ in } D \\\\
                                                   
                                  \begin{array}{c} \nu\times E = \nu\times E_0 \\ 
                                   N(x)^{-1}(\nu\times \curl E) = \nu\times \curl E_0 \end{array} \text{ on } \partial D \end{array} \label{mtep} \end{equation}
and that if $N(x)$ is real then the compact operator $F_m$ is normal and hence has an infinite number of eigenvalues. An identity analogous to \eqref{effo_eigs} can also be established for the magnetic far field operator $F_m$ \cite{kirsch_grinberg} and used to show that the eigenvalues of $F_m$ all lie on the circle
\begin{equation} \abs{\lambda - \frac{2\pi i}{k}} = \frac{2\pi}{k} \label{mffo_eigs_circle}. \end{equation}
If $N(x)$ is not real, then we may still establish the existence of infinitely many eigenvalues of $F_e$ and $F_m$ using Lidski's theorem \cite{colton_kress}, as we show in the following theorem. We first remark that both $F_e$ and $F_m$ are trace-class operators, as can be seen by considering truncated spherical harmonic expansions of the kernel of each operator.

\begin{theorem}

\label{th_lidski}

If $\textnormal{Im} N(x)$ is positive on a nonempty open set in $D$, then $F_e$ has infinitely many eigenvalues.

\end{theorem}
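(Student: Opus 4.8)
The plan is to convert the quadratic identity \eqref{effo_eigs} into an operator identity and then exploit a sign condition on the real part of $F_e$. Using the adjoint relations $(g,F_e h)=(F_e^* g,h)$ and $(F_e g,F_e h)=(F_e^* F_e g,h)$ in the right-hand side of \eqref{effo_eigs}, I would introduce the self-adjoint operator
\begin{equation*} T:=-2\pi(F_e+F_e^*)-F_e^* F_e, \qquad (Tg,h)=k\iint_D \textnormal{Im}\,N(x)\,E_g\cdot\overline{E_h}\,dx. \end{equation*}
Since $\textnormal{Im}\,N\ge 0$ by \eqref{N_conditions}, the form $(Tg,g)=k\iint_D \textnormal{Im}\,N\,\abs{E_g}^2\,dx$ is nonnegative, so $T\ge 0$; rearranging gives $F_e+F_e^*\le -\tfrac{1}{2\pi}F_e^* F_e\le 0$, i.e.\ the self-adjoint part $\textnormal{Re}\,F_e=\tfrac12(F_e+F_e^*)$ is negative semidefinite, and in fact $-4\pi\,\textnormal{Re}(F_e g,g)-\norm{F_e g}^2=(Tg,g)\ge 0$ for every $g$. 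The same identity also yields injectivity: if $F_e g=0$ then $k\iint_D\textnormal{Im}\,N\,\abs{E_g}^2=0$, which forces $E_g\equiv 0$ on the open set where $\textnormal{Im}\,N$ is positive, hence $E_g\equiv 0$ by unique continuation for the homogeneous system \eqref{maxwell_1}, and therefore $g=0$.

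The key structural fact I would isolate is the following lemma, which is where Lidskii's theorem enters: a nonzero trace-class operator $A$ with $\textnormal{Re}\,A\le 0$ cannot be quasinilpotent. Indeed, if $\sigma(A)=\{0\}$ then Lidskii's theorem gives $\textnormal{tr}\,A=\sum_j\lambda_j=0$, so $\textnormal{tr}(\textnormal{Re}\,A)=\textnormal{Re}(\textnormal{tr}\,A)=0$; a negative-semidefinite self-adjoint trace-class operator of zero trace must vanish, so $\textnormal{Re}\,A=0$, whence $A=i\,\textnormal{Im}\,A$ with $\textnormal{Im}\,A$ self-adjoint and $\sigma(\textnormal{Im}\,A)=\{0\}$, forcing $A=0$. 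To prove the theorem I would then argue by contradiction: suppose $F_e$ had only finitely many eigenvalues. Each nonzero eigenvalue of the compact operator $F_e$ is isolated with finite algebraic multiplicity, so the nonzero spectrum is a finite set, and the Riesz decomposition splits $L_t^2(\mathbb{S}^2)=\mathcal{H}_1\oplus\mathcal{H}_2$ into $F_e$-invariant subspaces with $\mathcal{H}_1$ finite-dimensional carrying all nonzero eigenvalues and $F_2:=F_e|_{\mathcal{H}_2}$ quasinilpotent on the infinite-dimensional space $\mathcal{H}_2$.

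Since $\mathcal{H}_2$ is $F_e$-invariant, $F_2$ is again trace-class, and because $\mathcal{H}_2$ inherits the ambient inner product the inequality $-4\pi\,\textnormal{Re}(F_2 g,g)-\norm{F_2 g}^2\ge 0$ holds for all $g\in\mathcal{H}_2$, so $\textnormal{Re}\,F_2\le 0$. The lemma then forces $F_2=0$, i.e.\ $F_e$ vanishes on the nontrivial subspace $\mathcal{H}_2$, contradicting the injectivity established above; hence $F_e$ must have infinitely many eigenvalues. The main obstacle I anticipate is precisely this last reduction. A naive trace computation cannot succeed, since trace-class quasinilpotent (Volterra-type) operators have no eigenvalues at all, so compactness and the trace identity by themselves carry no information about the number of eigenvalues; the proof must use the negative-definiteness of $\textnormal{Re}\,F_e$ together with injectivity to exclude a quasinilpotent part. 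Care is needed to verify that the invariant subspace $\mathcal{H}_2$ supplied by the Riesz projection is genuinely $F_e$-invariant, so that $F_2$ is a restriction rather than a compression, and that both the trace-class property and the sign condition pass to $F_2$; these points, together with the correct application of Lidskii's theorem in the lemma, are the heart of the argument.
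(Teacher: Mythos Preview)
Your proof is correct and follows the same strategy as the paper: extract a sign condition from \eqref{effo_eigs}, establish injectivity of $F_e$, and conclude via Lidskii. The paper rescales to $\tilde F_e=-ikF_e$ so that the sign condition becomes $\textnormal{Im}(\tilde F_e g,g)\ge 0$ and then invokes the ready-made corollary of Lidskii's theorem (trace-class $+$ finite-dimensional kernel $+$ nonnegative imaginary part $\Rightarrow$ infinitely many eigenvalues) as a black box; you obtain the equivalent condition $\textnormal{Re}\,F_e\le 0$ and instead re-derive that corollary from the Lidskii trace formula by hand, via your Riesz-decomposition and quasinilpotence lemma. For injectivity the paper passes through Theorem~\ref{th_effo} and shows that no real transmission eigenvalue exists when $\textnormal{Im}\,N>0$ somewhere, while you read injectivity straight off the identity \eqref{effo_eigs}; both routes work. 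Two small fixes are needed in your injectivity step: the total field $E_g$ satisfies $\curl\curl E_g-k^2N(x)E_g=0$ in $D$, not the free-space system \eqref{maxwell_1}, so unique continuation must be invoked for the variable-coefficient equation (valid since $N\in C^1(\overline{D})$); and from $E_g\equiv 0$ in $D$ you still need one more step to reach $g=0$ --- the zero Cauchy data on $\partial D$ force $E_g\equiv 0$ in $\mathbb{R}^3\setminus\overline{D}$ as well, whence $E_g^s=-E_g^i$ there, and since the Herglotz field $E_g^i$ is entire this makes $E_g^s$ an entire radiating solution, hence $E_g^i=0$ and finally $g=0$.
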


\begin{proof} Since $F_e$ is a trace-class operator, by Lidski's theorem it remains to show that $F_e$ has a finite-dimensional nullspace and an imaginary part which is nonnegative. Unfortunately, the formula \eqref{effo_eigs} does not provide the second requirement, and we instead show it for a slightly modified operator $\tilde{F}_e$. In order to prove the first part, we show that under our assumption on $N$ no real transmission eigenvalues can exist, from which Theorem \ref{effo} implies that $F_e$ is injective. Indeed, if $E, E_0$ satisfies the homogeneous interior transmission problem \eqref{etep}, then we see from the equation for $E_0$ in $D$ and the integration by parts formula for the curl operator that
\begin{equation*} \int_{\partial D} \left[ \left( \curl E_0\times\overline{E_0} \right)\cdot\nu - \left(\curl\overline{E_0}\times E_0\right)\cdot\nu \right] ds = 0. \end{equation*}
Applying the vector identity $(\mathbf{a}\times\mathbf{b})\cdot\mathbf{c} = -\mathbf{a}\cdot(\mathbf{c}\times\mathbf{b})$, the boundary conditions, and the same vector identity again yields
\begin{equation*} \int_{\partial D} \left[ \left( \curl E\times\overline{E} \right)\cdot\nu - \left(\curl\overline{E}\times E\right)\cdot\nu \right] ds = 0, \end{equation*}
and it follows from another application of the integration by parts formula that
\begin{align*} 0 &= \iint_D \left( \curl\curl E\cdot\overline{E} - E\cdot\curl\curl\overline{E} \right) dx
\\&= 2ik^2 \iint_D \text{Im} N(x) \abs{E}^2 dx. \end{align*}
Thus, we observe that $E=0$ on the open set $D_0 := \{x\in D\,:\, \text{Im} N(x) = 0\}$, and by the unique continuation principle it follows that $E=0$ in all of $D$. This result implies that $E_0=0$ as well, and we conclude that $k$ is not a transmission eigenvalue. 

In order to prove the second part, we rewrite \eqref{effo_eigs} in terms of $\tilde{F}_e := -ik F_e$ as
\begin{equation} ik^2\iint_D \textnormal{Im} N(x) E_g\cdot\overline{E_h}\, dx = 2\pi(\tilde{F}_eg,h) - 2\pi(g,\tilde{F}_eh) - \frac{i}{k}(\tilde{F}_eg,\tilde{F}_eh) \label{effo_eigs2}, \end{equation}
from which it follows that for all $g\in L_t^2(\mathbb{S}^2)$ we have
\begin{align*} \text{Im}(\tilde{F}_eg,g) &= \frac{1}{2i} \left[ (\tilde{F}_eg,g) - (g,\tilde{F}_eg) \right]
\\&= \frac{1}{4\pi i} \left[ ik^2\iint_D \text{Im}N(x)\abs{E_g}^2 dx + \frac{i}{k} \norm{\tilde{F}_eg}^2 \right]
\\&\ge0. \end{align*}
Therefore, the assumptions of Lidski's theorem are satisfied for the operator $\tilde{F}_e := -ikF_e$, and we conclude that $\tilde{F}_e$ and hence $F_e$ has infinitely many eigenvalues. \end{proof}

\vspace{.2in}

A similar computation establishes the result for the magnetic far field operator $F_m$. Note that the definition of the electric and magnetic far field operators in \cite{kirsch_grinberg} differ by a factor of $4\pi$ from the ones that we are using. \par

We now present two methods for determining transmission eigenvalues from the measured scattering data. We first note that the transmission eigenvalue problems \eqref{etep} and \eqref{mtep} are seen to be equivalent by a simple change of dependent variables and hence have the same eigenvalues. Hence there is no ambiguity in simply referring to the eigenvalues of \eqref{etep} and \eqref{mtep} as transmission eigenvalues. We will restrict our attention to considering $H_\infty(\hat{x};d,p)$. We always assume that $\textnormal{Im} N = 0$ and that $D$ is known ($D$ can be determined by using the linear sampling method --- c.f. \cite{cakoni_colton_monk}). \par

We first show how transmission eigenvalues can be determined from the magnetic far field operator $F_m$.

\begin{definition} \textnormal{If the solution $E_0$ of \eqref{etep} is the electric field of an electromagnetic Herglotz pair then we call the transmission eigenvalue $k$ a} nonscattering wave number.

\end{definition}

It is clear that the concept of nonscattering wave numbers is far more restrictive than the concept of transmission eigenvalues. Indeed, the only case known to date when a transmission eigenvalue is a nonscattering wave number is the case when $D$ is a ball and $N(x) = n(\abs{x})I$. We define
\begin{equation} H_{e,\infty}(\hat{x};z,p) := \frac{ik}{4\pi}(\hat{x}\times p)e^{-ik\hat{x}\cdot z} \label{ffp_dipole} \end{equation}
where $z\in\mathbb{R}^3$ and note that the right-hand side of \eqref{ffp_dipole} is the far field pattern of the magnetic field of an electric dipole. We now let $g_z^\alpha\in L_t^2(\mathbb{S}^2)$ be the Tikhonov regularized solution of the magnetic far field equation
\begin{equation} (F_mg)(\hat{x}) = H_{e,\infty}(\hat{x};z,p) \label{mffe} \end{equation}
i.e. $g_z^\alpha$ is the solution to
\begin{equation} (\alpha I + F_m^*F_m)g_z^\alpha = F_m^* H_{e,\infty} \label{mffe_reg}. \end{equation}
We then have the following result (c.f. Theorem 4.44 of \cite{cakoni_colton_haddar} for the scalar case; the proof in the vector case proceeds in the same manner).

\begin{theorem} \label{th_determine} Assume that $D$ is simply connected and that $N(x)$ satisfies one of the two conditions stated in Theorem \ref{th_existence}. Assume further that $k$ is not a nonscattering wave number and let $Hg_z^\alpha$ denote the magnetic field of the electromagnetic field defined by \eqref{herglotz}. Then for any ball $B\subset D$, $\norm{Hg_z^\alpha}_{L^2(D)}$ is bounded as $\alpha\to0$ for almost every $z\in B$ if and only if $k$ is not a transmission eigenvalue.

\end{theorem}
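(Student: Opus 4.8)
The plan is to recast the magnetic far field equation \eqref{mffe} as an interior transmission problem driven by the dipole field, and then to read the growth of the regularized solution directly off the solvability of that problem. Write $\Phi_z$ for the magnetic field of the electric dipole at $z$ with polarization $p$, a radiating solution of the free Maxwell system in $\mathbb{R}^3\setminus\{z\}$ whose far field pattern is $H_{e,\infty}(\cdot;z,p)$, and factor $F_m=G\mathcal{H}$. Here $\mathcal{H}\colon L_t^2(\mathbb{S}^2)\to L^2(D)$ sends a kernel $g$ to the Herglotz field $Hg$ restricted to $D$, so that $\norm{\mathcal{H}g}_{L^2(D)}=\norm{Hg}_{L^2(D)}$, while $G$ maps a free Maxwell field in $D$, used as incident field, to the far field pattern of the scattered field it produces; $\mathcal{H}$ has dense range in the space of free Maxwell solutions and $G$ is bounded. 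Since $\textnormal{Im}\,N=0$ the operator $F_m$ is normal, and since $k$ is not a nonscattering wave number $F_m$ is injective; hence $F_m$ is injective with dense range and the standard theory of Tikhonov regularization applies to \eqref{mffe_reg}. Two facts organize the proof. First, $\norm{Hg_z^\alpha}_{L^2(D)}$ remains bounded as $\alpha\to0$ if and only if there is a sequence $g_n$ with $F_mg_n\to H_{e,\infty}(\cdot;z,p)$ and $\norm{Hg_n}_{L^2(D)}$ bounded (the standard description of regularized solutions, c.f. \cite{cakoni_colton_haddar}). Second, extracting a weak limit of such a bounded sequence shows that it exists precisely when $H_{e,\infty}(\cdot;z,p)$ lies in the range of $G$, which by construction holds exactly when the inhomogeneous version of \eqref{mtep} obtained by prescribing the Cauchy data of $\Phi_z$ on $\partial D$ is solvable.

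First I would take $k$ not to be a transmission eigenvalue. Then \eqref{mtep} has only the trivial solution, so, using that $D$ is simply connected and the Fredholm theory available under the hypotheses of Theorem \ref{th_existence}, the inhomogeneous problem with data $\Phi_z$ is uniquely solvable for every $z\in D$, and its free field $v_z$ satisfies $Gv_z=H_{e,\infty}(\cdot;z,p)$. Choosing Herglotz fields $Hg_n\to v_z$ gives $F_mg_n=G\mathcal{H}g_n\to H_{e,\infty}(\cdot;z,p)$ with $\norm{Hg_n}_{L^2(D)}\to\norm{v_z}_{L^2(D)}$ bounded, and the two facts above yield boundedness of $\norm{Hg_z^\alpha}_{L^2(D)}$ for every $z\in D$, hence for almost every $z\in B$.

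For the converse I argue by contraposition: suppose $k$ is a transmission eigenvalue. Then the homogeneous problem \eqref{mtep} has a nontrivial finite-dimensional solution space, and by the Fredholm structure of \eqref{mtep} the inhomogeneous problem with data $\Phi_z$ is solvable if and only if the Cauchy data of $\Phi_z$ on $\partial D$ is orthogonal to the finite-dimensional adjoint eigenspace. A Stratton--Chu representation rewrites these finitely many conditions as $c_j(z)=0$, $j=1,\dots,m$, where each $c_j$ is obtained by pairing the dipole data against an adjoint eigenfunction $\phi_j$ and is therefore real-analytic in $z\in B$. If some $c_j$ vanished on an open subset of $B$ it would vanish identically on $B$, and the completeness of the Cauchy data of the dipoles $\{\Phi_z\}$ would then force $\phi_j=0$, a contradiction; hence each zero set $\{c_j=0\}$ has measure zero and the set of $z\in B$ for which the inhomogeneous problem is solvable is null. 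For every other $z$ there is no bounded sequence $Hg_n$ with $F_mg_n\to H_{e,\infty}(\cdot;z,p)$, so the characterization above gives that $\norm{Hg_z^\alpha}_{L^2(D)}$ is unbounded as $\alpha\to0$. Thus boundedness fails for almost every $z\in B$, which is the desired contrapositive.

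The step I expect to be the main obstacle is the converse, and within it the passage from non-solvability of the interior transmission problem to genuine unboundedness of $\norm{Hg_z^\alpha}_{L^2(D)}$. This rests on the precise characterization of when Tikhonov regularization keeps the auxiliary Herglotz norm bounded, together with the weak-compactness argument identifying the limit of $Hg_n$ with a free Maxwell field; both must be carried out in the correct vector-valued spaces with the trace operators of \eqref{mtep} matched exactly, which is where most of the routine but delicate bookkeeping sits. A further subtlety, absent in the scalar case, is that the dipole carries a fixed polarization $p$, so verifying that the analytic functions $c_j$ are not identically zero --- equivalently, that the adjoint eigenfunctions are not systematically annihilated by the chosen polarization --- requires the completeness of the dipole Cauchy data rather than a bare unique-continuation statement.
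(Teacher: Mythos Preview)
The paper does not give its own proof of this theorem: it simply refers to Theorem~4.44 of \cite{cakoni_colton_haddar} for the scalar case and states that ``the proof in the vector case proceeds in the same manner.'' Your outline is precisely that standard argument --- the factorization $F_m=G\mathcal{H}$, the Tikhonov-regularization characterization of boundedness of $\norm{Hg_z^\alpha}_{L^2(D)}$ in terms of the range of $G$, solvability of the inhomogeneous interior transmission problem via the Fredholm alternative, and the real-analyticity/measure-zero argument for the converse --- so your approach coincides with what the paper invokes by reference.
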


In particular, if one plots $k$ versus $\norm{g_z^\alpha}_{L^2(\mathbb{S}^2)}$ for several choices of points $z$ then the location of transmission eigenvalues will appear as sharp peaks in the graph (for the scalar case see Figure 4.2 of \cite{cakoni_colton_haddar}). \par

We now turn our attention to a second method for determining transmission eigenvalues from the magnetic far field operator $F_m$ which is based on the behavior of the phase of the eigenvalues of the compact normal operator $F_m$. To this end, we recall that if $k>0$ is not a transmission eigenvalue then $F_m = F_{m,k}$ is injective where we have explicitly noted the dependence of $F_m$ on $k$. Hence if $k>0$ is not a transmission eigenvalue, we have the existence of a complete orthonormal basis $(g_j(k))_{j=1}^\infty$ of $L^2(\mathbb{S}^2)$ such that
\begin{equation} F_{m,k} g_j(k) = \lambda_j(k) g_j(k) \label{mffo_basis} \end{equation}
where $\lambda_j(k)\neq0$ forms a sequence of complex numbers that goes to zero as $j\to\infty$. Define
\begin{equation} \hat{\lambda}_j(k) := \frac{\lambda_j(k)}{\abs{\lambda_j(k)}} \label{mffo_eigs_hat}. \end{equation}
We then have the following theorem due to Lechleiter and Rennoch \cite{lechleiter_rennoch}:

\begin{theorem} \label{th_inside_outside} Assume that condition 1 (respectively, condition 2) of Theorem \ref{th_existence} is valid. Let $k_0>0$ and let $(k_\ell)$ be a sequence of positive numbers converging to $k_0$ as $\ell\to\infty$. Assume there exists a sequence $(\hat{\lambda}_\ell) = \hat{\lambda}_{j_\ell}(k_\ell)$ for some index $j_\ell$ such that $\hat{\lambda}_{\ell}\to -1$ (respectively $\hat{\lambda}_\ell\to +1$) as $\ell\to\infty$. Then $k_0$ is a transmission eigenvalue.

\end{theorem}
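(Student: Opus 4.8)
The plan is to argue by contradiction: I assume that $k_0$ is \emph{not} a transmission eigenvalue and show that, under condition 1 of Theorem \ref{th_existence}, no sequence of eigenvalue phases can approach $-1$ (the case of condition 2 with limit $+1$ being identical after the obvious sign changes). The first thing I would record is the geometric meaning of the hypothesis $\hat\lambda_\ell\to -1$. Since for real $N$ the eigenvalues of $F_{m,k}$ lie on the circle \eqref{mffo_eigs_circle}, which is tangent to the real axis at the origin and otherwise contained in the open upper half-plane, the only point of that circle whose phase equals $\pm1$ is the origin itself; hence $\hat\lambda_\ell\to-1$ forces $\lambda_\ell\to 0$ with $\mathrm{Re}\,\lambda_\ell<0$. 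Equivalently, writing the unitary scattering operator $S_k=I+\tfrac{ik}{2\pi}F_{m,k}$, whose eigenvalues are $e^{i\beta_j(k)}$ (one checks $e^{i\beta}=1+\tfrac{ik}{2\pi}\lambda$ is exactly the change of variables putting \eqref{mffo_eigs_circle} on the unit circle), the hypothesis says that some scattering eigenphase satisfies $\beta_{j_\ell}(k_\ell)\to 0^-$. The goal is therefore to show that, away from transmission eigenvalues, eigenvalues can accumulate at the origin only from the side $\mathrm{Re}\,\lambda>0$ (phase $+1$), and that reaching the other side requires $k_0$ to be a transmission eigenvalue.

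The engine for this is the factorization of the far field operator together with the sign condition on $N-1$. Following the factorization method I would write $F_{m,k}=H_k^{*}\,T_k\,H_k$, where $H_k$ is the compact, injective, dense-range Herglotz operator sending a kernel $g$ to the relevant tangential trace of its Herglotz field on $\partial D$, and $T_k$ is the middle operator built from the interior transmission problem \eqref{mtep}. Evaluating on an eigenfunction gives
\begin{equation*} \lambda_j(k) = (F_{m,k}g_j,g_j) = (T_k\,\phi_j,\phi_j), \qquad \phi_j := H_k g_j, \end{equation*}
so the sign of $\mathrm{Re}\,\lambda_j$ is controlled by the real part of the quadratic form $\phi\mapsto(T_k\phi,\phi)$. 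Under condition 1 the strict inequality $N\ge 1+\alpha$ yields a definite-sign coercivity (up to a compact perturbation) of $\mathrm{Re}\,T_k$, the sign being inherited from $N-1>0$; this is precisely the coercivity underlying the variational existence theory behind Theorem \ref{th_existence}, and it is what ties the accumulation of phases at $-1$ to the singularity of $T_k$.

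Next I would set up the variational characterization of the eigenphases that actually does the counting. Since $F_{m,k}$ is normal, its eigenphases admit a min--max principle: ordering $\beta_1(k)\ge\beta_2(k)\ge\cdots\to0$, each $\beta_j(k)$ is a min--max of a Rayleigh quotient of the self-adjoint operators $\mathrm{Re}\,F_{m,k}$ and $\mathrm{Im}\,F_{m,k}$ coming from the factorization, hence expressible through $(T_k\phi,\phi)$ with $\phi$ ranging over $\mathrm{ran}\,H_k$. Combining this with analyticity of $k\mapsto H_k,T_k$ and the coercivity of $\mathrm{Re}\,T_k$, I would establish that near $k_0$ the phases depend continuously (indeed monotonically) on $k$ and that a phase can cross the value $0$ from above to below only when $T_k$ fails to be injective on $\mathrm{ran}\,H_k$, i.e. only when \eqref{mtep} has a nontrivial solution. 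If $k_0$ were not a transmission eigenvalue, then $T_{k_0}$ would be regular and, together with the uniform coercivity estimate, this would bound the negative-real-part eigenphases away from $0$ for all $k_\ell$ sufficiently close to $k_0$, contradicting $\beta_{j_\ell}(k_\ell)\to0^-$. This contradiction forces $k_0$ to be a transmission eigenvalue.

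The hard part, and the step I would spend the most care on, is the \emph{uniform-in-$k$} coercivity near $k_0$ together with control of the eigenfunctions $\phi_{j_\ell}=H_{k_\ell}g_{j_\ell}$. The difficulty is twofold: the coercivity of $\mathrm{Re}\,T_k$ holds only modulo a compact operator, so passing to the limit $k_\ell\to k_0$ requires a weak-compactness argument to rule out the relevant mass escaping into $\ker H_{k_\ell}$; and one must verify that the min--max value realizing the phase $\to-1$ is attained along a sequence converging, after normalization, to a nonzero element whose limit solves the homogeneous interior transmission problem. Showing that this limit is nontrivial — rather than vanishing or disappearing into the kernel of $H_{k_0}$ — is exactly where the strict inequality in condition 1 (or 2) is used, and is the technical heart of the theorem.
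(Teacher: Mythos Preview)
The paper does not actually prove this theorem: it is stated with attribution to Lechleiter and Rennoch \cite{lechleiter_rennoch}, and the only remark the paper adds is the observation (which you also make) that compactness of $F_{m,k}$ together with \eqref{mffo_eigs_circle} forces $\pm 1$ to be the only possible accumulation points of the phases $\hat\lambda_j(k)$. So there is no ``paper's own proof'' to compare against.

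Your outline is in the spirit of the inside--outside duality argument of the cited reference: factorization $F_{m,k}=H_k^{*}T_kH_k$, a sign condition on the real part of the middle operator coming from the contrast $N-I$ under condition~1 (resp.\ condition~2), and a min--max/Rayleigh-quotient description of the eigenphases of the unitary scattering operator $S_k=I+\tfrac{ik}{2\pi}F_{m,k}$. Your reading of the geometry of \eqref{mffo_eigs_circle} --- that $\hat\lambda_\ell\to -1$ forces $\lambda_\ell\to 0$ with $\mathrm{Re}\,\lambda_\ell<0$, equivalently $\beta_{j_\ell}(k_\ell)\to 0^-$ --- is correct and is exactly the starting point there.

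Two places where your sketch would need tightening before it becomes a proof. First, the assertion that the eigenphases depend \emph{monotonically} on $k$ is not established and is not what drives the argument; what is actually used is a Courant--Fischer characterization of (a function of) the phases in terms of a self-adjoint operator of the form ``coercive plus compact,'' so that only finitely many phases can lie on the ``wrong'' side, and those finitely many are uniformly bounded away from $0$ on that side provided the interior transmission problem at $k_0$ is boundedly invertible. Second, the ``crossing $0$'' picture and the claim that a crossing forces $T_{k_0}$ non-injective is heuristic: the rigorous step is to show that if $k_0$ is not a transmission eigenvalue then the relevant self-adjoint operator is bounded below (resp.\ above) uniformly for $k$ near $k_0$, which directly excludes $\beta_{j_\ell}(k_\ell)\to 0^-$ (resp.\ $0^+$). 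With those two corrections your plan is the standard one; the weak-compactness and nontriviality issues you flag in the last paragraph are indeed the technical core of the cited proof.
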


Note that since $F_{m,k}$ is compact and all the eigenvalues lie on the circle \eqref{mffo_eigs_circle}, the only possible accumulation points of the sequence $\hat{\lambda}_\ell$ are $-1$ and $+1$. \par

The criterion of Theorem \ref{th_inside_outside} can be used as an indicator of transmission eigenvalues. However, the hard part is to prove that it occurs for every transmission eigenvalue. We refer the reader to \cite{lechleiter_rennoch} for a further discussion on this issue.

\section{Stekloff Eigenvalues}
So far we have seen two families of eigenvalues that can be determined from scattering data:
\begin{description}
\item[{\bf{}Eigenvalues of the Electric Far Field Operator}:]  These can be computed directly from the far field pattern 
using single frequency data.  However, it is not easy to determine how changes in the material properties of the object
(i.e. $N(x)$) perturb the eigenvalues.
\item[{\bf{}Transmission Eigenvalues:}]  These have a direct relation to $N(x)$ as shown in Theorem~\ref{th_monotone}. However, they have to be computed using multi-frequency data and can only be determined for dielectric scatterers.  
\end{description}
We shall now introduce a family of eigenvalues from \cite{stek2} that can be computed from the far field pattern at a single frequency, and for which a simple perturbation theory is known.  This is achieved by constructing a modified far field operator using an auxiliary problem which includes an appropriate eigenparameter. 

To define this problem choose a domain $B$ such that either 1) $B=D$ or 2) $B$ is a ball containing $\overline{D}$ in its interior.  We also need an operator $S:L_t^2(\partial B)\to L_t^2(\partial B)$ such that $S$ is self-adjoint, bounded and
\[
\langle Su,u\rangle\geq 0\mbox{ for all } u\in L_t^2(\partial B),
\]
where $\langle\cdot,\cdot\rangle$ is the $L^2$ inner product on $\partial B$. Next we define, for any sufficiently smooth vector field $w$, the tangential component of $w$ on $\partial B$ by
\[
w_T=(\nu\times w)\times \nu\mbox{ on  }\partial B.
\]
Finally we need to choose an impedance parameter $\lambda\in \mathbb{R}$ with $\lambda>0$ (note that a standard impedance parameter
would typically be complex). 
Now we can define the solution $E_S$ of the following generalized impedance problem 
\begin{eqnarray} 
\curl\curl E_S- k^2 E_S&=& 0  \text{ in } \mathbb{R}^3\setminus \overline{B}\\
\nu\times \curl E_S &=& \lambda SE_{S,T}  \text{ on } \partial B\\
E^i+E^s_S&=&E_S\mbox{ in } \mathbb{R}^3\setminus B\\
\lim_{r\to\infty}\left(\curl E_S^s\times x-ikrE_S^s\right)&=&0.
 \label{Sscat} \end{eqnarray}
 This scattering problem has a unique solution for any $k>0$ as shown in \cite{stek2} (for any  $\lambda$, any solution is always unique).
 
 Then, as usual for a scattering problem, the scattered field $E_S^s$ has the asymptotic expansion
 \[
 E_S^s(x)=\frac{\exp(ikr)}{r}E_{S,\infty}(\hat{x},d;p)+O\left(\frac{1}{r^2}\right)\mbox{ as }r\to\infty,
 \]
 and we can then define the  impedance far field operator by
 \[
 (F_Sg)(\hat{x})=\int_{\mathbb{S}^2}E_{S,\infty}(\hat{x};d,g(d))\,ds(d).
 \]
 The modified far field operator is then defined by
 \[
 F_M=F_m-F_S.
 \]
We can see a link between the modified far field operator and the interior Stekloff eigenvalue problem as 
argued in \cite{stek2}.  There it is shown that $F_M$ is injective with dense range provided $\lambda$ is not a generalized Stekloff eigenvalue of the problem
\begin{eqnarray}
\curl\curl w-k^2N w&=&0\mbox{ in }B,\\
\nu\times\curl w-\lambda S w_T&=&0\mbox{ on }\partial B.
\end{eqnarray}
It is then necessary to analyze the existence of generalized Stekloff eigenvalues, and this analysis depends on the choice of $S$.  The most obvious choice corresponding to the standard impedance boundary condition is $S=I$.  Unfortunately, direct calculation of the eigenvalues in the case when $N=1$ and $B$ is a ball shows that there are two families of eigenvalues having different accumulation points (one at infinity and one at zero).  Indeed in this case, assuming $N=1$, if $\lambda$ is an eigenvalue then so is $-k^2/\lambda$. Thus they cannot be analyzed as the 
eigenvalues of a compact operator.

Instead, in \cite{stek2} we make the choice of $S$ as follows.  Let $u\in L^2_t(\partial B)$ and define 
$q\in H^1(\partial B)/\mathbb{R}$ by solving
\[
\Delta_{\partial B}q=\curl_{\partial B} u.
\]
Note that this assumes that if $B=D$ then $\partial D$ has just one connected component.
Then $Su=\vec{\curl}_{\partial B}u$.  Here $\Delta_{\partial B}$ is the Laplace-Beltrami operator on $\partial B$,
and $\curl_{\partial B}$ and $\vec{\curl}_{\partial B}$ are the scalar and vector surface curls, respectively.

Using $S$ we can now write the generalized Stekloff eigenvalue problem as an operator equation.
We introduce the operator $T:H({\rm{}div}_{\partial B}^0,\partial B)\to H({\rm{}div}^0_{\partial B},\partial B)$ where
\[
H({\rm{}div}^0_{\partial B},\partial B)=\{ u\in L_t^2(\partial B)\;|\; \nabla_{\partial B}\cdot u=0 \mbox{ on }\partial B\}
\]
defined as follows.  For given $f\in H({\rm{}div}^0_{\partial B},\partial B)$ we define $w$ to be the weak solution
of
\begin{eqnarray*}
\curl\curl w-k^2\epsilon_rw&=&0\mbox{ in } B\\
\nu\times\curl w&=&-f\mbox{ on }\partial B.
\end{eqnarray*}
This is a well posed problem provided $k^2$ is not an interior Neumann eigenvalue for the curl-curl operator. These eigenvalues form a discrete set and from now on we assume $k^2>0$ is not such an eigenvalue.
Then 
\[
Tf=Sw_T\mbox{ on }\partial B.
\]
The fact that $Sw_T$ is surface divergence free can be used to show that $Tf$ is actually in $(H^{1/2}(\partial B))^3$ and hence the operator $T$ is compact.  Furthermore it is self adjoint, and consequently there exist infinitely many eigenvalues
$\mu$ with associated eigenfunction $u\not=0$ for the problem
\[
Tu=\mu u.
\]
Considering the definition of $T$, we see that if $\mu$ is an eigenvalue for $T$ then $\lambda=-1/\mu$ is a generalized Stekloff eigenvalue.  Thus we conclude:
\begin{theorem}[Thm. 3.6 of \cite{stek2}] When $\epsilon_r$ is real, and $k^2$ is not an interior Neumann eigenvalue for 
the curl-curl operator, there exists a countable set of real generalized Stekloff eigenvalues that accumulate at infinity.
\end{theorem}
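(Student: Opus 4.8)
The plan is to prove the theorem by realizing the generalized Stekloff eigenvalues as the reciprocals (with a sign) of the eigenvalues of the compact, self-adjoint operator $T$, so that the conclusion reduces to an application of the spectral theorem for compact self-adjoint operators. First I would confirm that $T$ is well defined and bounded on $H(\mathrm{div}^0_{\partial B},\partial B)$. Boundedness of the solution map $f\mapsto w$ for the interior problem $\curl\curl w-k^2\epsilon_r w=0$ in $B$, $\nu\times\curl w=-f$ on $\partial B$, is exactly where the hypothesis that $k^2$ is not an interior Neumann eigenvalue of the curl--curl operator enters, since this guarantees a unique weak solution depending continuously on the data. That $Tf=Sw_T$ lands in $H(\mathrm{div}^0_{\partial B},\partial B)$ is then immediate from the definition of $S$, because $Sw_T=\vec{\curl}_{\partial B}q$ and the range of the vector surface curl is automatically surface-divergence free.

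Next I would establish that $T$ is self-adjoint, which is the step that forces the eigenvalues to be real and which uses the hypothesis that $\epsilon_r$ is real. Given $f,g\in H(\mathrm{div}^0_{\partial B},\partial B)$ with interior solutions $w,v$, the Green identity for the curl--curl operator, $\int_B(\curl\curl w\cdot\overline{v}-w\cdot\curl\curl\overline{v})\,dx=\int_{\partial B}[(\nu\times\curl w)\cdot\overline{v}-(\nu\times\curl\overline{v})\cdot w]\,ds$, has a vanishing volume term precisely because $\epsilon_r$ is real and symmetric, so that $\epsilon_r w\cdot\overline{v}=w\cdot\epsilon_r\overline{v}$. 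Combining the resulting boundary identity with the boundary conditions $\nu\times\curl w=-f$ and $\nu\times\curl v=-g$, together with the self-adjointness of $S$ and the fact that $Sf=f$, $Sg=g$ (valid since $f,g$ are already surface-divergence free, so $S$ acts as the identity on them), reduces the computation to $\langle Tf,g\rangle=\langle w_T,g\rangle=\langle f,v_T\rangle=\langle f,Tg\rangle$. Hence $T$ is self-adjoint and its nonzero eigenvalues are real.

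The main obstacle is the compactness of $T$, which I would obtain from a boundary regularity gain. For $w\in H(\curl,B)$ the tangential trace satisfies $w_T\in H^{-1/2}(\curl_{\partial B},\partial B)$, so in particular $\curl_{\partial B}w_T\in H^{-1/2}(\partial B)$. Solving the Laplace--Beltrami equation $\Delta_{\partial B}q=\curl_{\partial B}w_T$ then yields $q\in H^{3/2}(\partial B)$, and applying the vector surface curl gives $Tf=Sw_T=\vec{\curl}_{\partial B}q\in(H^{1/2}(\partial B))^3$. Since the embedding $H^{1/2}(\partial B)\hookrightarrow L^2(\partial B)$ is compact, $T$ is compact on $H(\mathrm{div}^0_{\partial B},\partial B)$. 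Carrying the trace and surface-calculus estimates through with the correct mapping orders --- the two first-order surface operators $\vec{\curl}_{\partial B}$ and $\curl_{\partial B}$ being over-compensated by the second-order smoothing of $\Delta_{\partial B}^{-1}$ --- is the technical heart of the argument, and is where the detailed knowledge of the trace spaces for $H(\curl,B)$ is needed.

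Finally I would invoke the spectral theorem for compact self-adjoint operators to obtain a countable family of real eigenvalues $\mu$ of $T$ whose only possible accumulation point is $0$. To guarantee that infinitely many of these are nonzero, I would check that $T$ has infinite-dimensional range, i.e. that the map $f\mapsto\nu\cdot\curl w$ (which equals $f\mapsto\curl_{\partial B}w_T$ up to sign) is not of finite rank; this rules out $T$ being finite rank and hence produces an infinite sequence $\mu_n\neq0$ with $\mu_n\to0$. Since, by the construction of $T$, each nonzero eigenvalue $\mu$ corresponds to a generalized Stekloff eigenvalue $\lambda=-1/\mu$, the reality of the $\mu_n$ gives real $\lambda_n$, while $\mu_n\to0$ forces $|\lambda_n|\to\infty$; thus the generalized Stekloff eigenvalues form a countable set whose only accumulation point is at infinity, as claimed.
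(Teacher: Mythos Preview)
Your proposal is correct and follows essentially the same route the paper sketches before stating the theorem (the full proof being deferred to \cite{stek2}): show $T$ is well defined by the assumption on $k^2$, compact via the $H^{1/2}(\partial B)$ regularity gain produced by $S$, and self-adjoint via Green's identity together with the reality of $\epsilon_r$, then apply the spectral theorem and read off $\lambda=-1/\mu$. Your explicit verification that $S$ acts as the identity on surface-divergence-free data and your check that $T$ has infinite rank simply fill in details the paper leaves implicit.
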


Supposing now that we can measure generalized Stekloff eigenvalues, we can assume that changes in these eigenvalues can give information about changes in $N(x)$ as is the case for the Helmholtz equation~\cite{stek1}.
To see this, suppose $(w,\lambda)$, $w\not=0$ is a generalized Stekloff eigenpair for permittivity $N(x)$ and that 
$(w_\delta,\lambda_\delta)$ is the corresponding eigenpair for $N(x)+\delta N(x)$ where $\Vert\delta N\Vert_{L^\infty}$ is small.
Then assuming that $w\approx w_\delta$ (for example, when the eigenvalue is simple and the perturbation
$\delta N$ is small), we have, neglecting quadratic terms, that
\begin{equation} \lambda-\lambda_\delta\approx -k^2\frac{(\delta Nw,w)}{\langle Sw_T,Sw_T\rangle} \label{shift_estimate} \end{equation}
where $\langle .,.\rangle$ is the $L^2$ inner product on $\partial B$ and $(.,.)$ is the $L^2$ inner product on $B$.

The main question now is how to determine generalized Stekloff eigenvalues (or at least a few of them) from far field scattering data.  As in the case of transmission eigenvalues, this involves the far field equation, and this time we use the
electric far field equation.  The outgoing electric field due to a point dipole at position $z$ with polarization $q$ in free space is~\cite{colton_kress}
\[
E_e = \frac{i}{k}\curl_x\curl_x(q\Phi(x,z))
\]
where $\curl_x$ denotes the curl with respect to $x$ and $\Phi$ is the fundamental solution of the Helmholtz equation
\[
\Phi(x,y)=\frac{\exp(ik|x-y|)}{4\pi |x-y|}, \; x\not= y.
\]
The far field pattern due to the dipole source is then given by
\[
E_{e,\infty}(\hat{x},z;q)=\frac{ik}{4\pi}(\hat{x}\times q)\times\hat{x} \exp(-ik\hat{x}\cdot z)
\]
where $\hat{x}\in \mathbb{S}^2$ is the observation direction (for comparison, see (\ref{Hs_asymptotics}) for the definition of the 
magnetic far field pattern, and (\ref{ffp_dipole}) for the magnetic far field pattern of a dipole source).

For generalized Stekloff eigenvalues, the far field equation corresponding to (\ref{mffe}) is then to seek $g_{z,q}\in L^2_t(\mathbb{S}^2)$ such that
\begin{equation}
(F_Mg_{z,q})(\hat{x})=E_{e,\infty}(\hat{x},z;q) \mbox{ for all }\hat{x}\in\mathbb{S}^2.\label{FMeq}
\end{equation}
As in the case of transmission eigenvalues, we actually solve a Tikhonov regularized version of this problem by 
choosing a regularization parameter $\alpha>0$ and solving
\[
(\alpha I+F_M^*F_M)g_{z,q,\alpha}=F_M^*E_{{e,\infty}}
\]
where $F_M^*$ is the $L^2$ adjoint of $F_M$.  Note that $F_M$ depends on the Stekloff parameter $\lambda$, so
$g_{z,q,\alpha}$ is also dependent on $\lambda$.  As $\lambda$ varies, we can use $\Vert g_{z,q,\alpha}\Vert_{L^2_t(\mathbb{S}^2)}$ as an indicator function for Stekloff eigenvalues.  Although we cannot prove that this is an appropriate indicator function, we
can prove that there is an approximate solution of (\ref{FMeq}) that does have this property.  All
numerical tests suggest that the solution of the above regularized problem can indeed serve as an indicator function.

In a similar way to the proof of Theorem~\ref{th_determine} we can now prove the analogous result for Stekloff eigenvalues. To do this we need to recall the definition of the electric Herglotz wave function
\[
v_g(x)=-ik\int_{\mathbb{S}^2}g(d)\exp(-ikx\cdot d)\,ds_d.
\]

\begin{theorem}[Thm. 4.2 of \cite{stek2}] Assume $\lambda$ is not a Stekloff eigenvalue and $k^2$ is not an interior 
Neumann eigenvalue for the curl-curl problem. Let $z\in D$ and $q$ be fixed.  Then for every $\epsilon>0$ there exists a function $g_\epsilon\in L^2_t(\mathbb{S}^2)$ that satisfies
\[
\lim_{\epsilon\to 0}\Vert F_Mg_\epsilon-E_{e,\infty}(\cdot,z;q)\Vert_{L^2_t(\mathbb{S}^2)}=0
\]
and such that $\Vert v_{g_{\epsilon}}\Vert_{L^2_t(B)}$ is bounded as $\epsilon\to 0$.
\end{theorem}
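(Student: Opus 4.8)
The plan is to follow the scheme of Theorem~\ref{th_determine}: convert the far field equation \eqref{FMeq} into an interior boundary value problem on $B$, solve that problem once and for all using the two non-eigenvalue hypotheses, and then approximate the associated incident field by an electric Herglotz wave function. First I would note that, since $z\in D$ lies in the interior of $B$, the dipole field $E_e$ is smooth on $\partial B$ and is a radiating solution of $\curl\curl E_e-k^2E_e=0$ outside $\overline{B}$. By Rellich's lemma and unique continuation, solving $F_Mg=E_{e,\infty}(\cdot,z;q)$ is therefore equivalent to requiring the modified scattered field $E_m^s-E_S^s$ to coincide with $E_e$ in $\mathbb{R}^3\setminus\overline{B}$. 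Writing the total fields as $E_m=v_g+E_m^s$ and $E_S=v_g+E_S^s$ and using that the tangential Cauchy data of $E_m$ are consistent across $\partial B$, this forces $E_S=E_m-E_e$ in the exterior; inserting the impedance condition $\nu\times\curl E_S=\lambda SE_{S,T}$ then yields the inhomogeneous generalized Stekloff condition
\[
\nu\times\curl E_m-\lambda S(E_m)_T=\nu\times\curl E_e-\lambda S(E_e)_T\quad\text{on }\partial B,
\]
while $E_m$ solves $\curl\curl E_m-k^2NE_m=0$ in $B$.

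Because $\lambda$ is not a generalized Stekloff eigenvalue, this interior problem is uniquely solvable, and the assumption that $k^2$ is not an interior Neumann eigenvalue for the curl--curl operator guarantees that the solution operator $T$ used in the reduction is well defined; both together make the reduction rigorous. Let $w_0:=E_m$ denote the resulting interior field, which is fixed (it depends only on $z,q,k,\lambda$ and $N$, not on $\epsilon$). The object I actually wish to approximate is the corresponding incident field $v_0$: a solution of $\curl\curl v_0-k^2v_0=0$ in $B$ for which $w_0$ is the medium-scattering total field, equivalently for which $w_0-v_0$ extends to a radiating solution outside $\overline{B}$. Introducing the bounded operator $\mathcal{B}$ that sends an entire free Maxwell field $v$ to the modified far field pattern of its scattered fields, so that $F_M=\mathcal{B}\mathcal{H}$ with $\mathcal{H}g=v_g$, the construction is designed precisely so that $\mathcal{B}v_0=E_{e,\infty}(\cdot,z;q)$.

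Granting $v_0$, I would invoke the density of electric Herglotz wave functions in the space of $H(\curl,B)$ solutions of $\curl\curl v-k^2v=0$ \cite{colton_kress} to pick $g_\epsilon\in L_t^2(\mathbb{S}^2)$ with $v_{g_\epsilon}\to v_0$ in $H(\curl,B)$, and in particular $\norm{v_{g_\epsilon}-v_0}_{L^2(B)}\to0$, as $\epsilon\to0$. Since $v_0$ is fixed this gives $\norm{v_{g_\epsilon}}_{L^2(B)}\to\norm{v_0}_{L^2(B)}$, which is the required boundedness. For the first assertion I would use the continuity of $\mathcal{B}$:
\[
\norm{F_Mg_\epsilon-E_{e,\infty}(\cdot,z;q)}_{L_t^2(\mathbb{S}^2)}=\norm{\mathcal{B}(v_{g_\epsilon}-v_0)}_{L_t^2(\mathbb{S}^2)}\le C\,\norm{v_{g_\epsilon}-v_0}_{H(\curl,B)}\to0.
\]

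The main obstacle is the existence of the fixed free field $v_0$ --- equivalently, the statement that $E_{e,\infty}(\cdot,z;q)$ lies exactly in the range of the interior data-to-pattern map $\mathcal{B}$. The reduction to the interior Stekloff problem and its unique solvability (where the hypotheses on $\lambda$ and $k^2$ enter) are the comparatively routine parts; the delicate point is realizing the incident part $v_0$ as a genuine entire solution rather than merely a field on $B$. Should an exact $v_0$ be unavailable, I would instead obtain $\norm{F_Mg_\epsilon-E_{e,\infty}(\cdot,z;q)}_{L_t^2(\mathbb{S}^2)}\to0$ directly from the dense range of $F_M$ (guaranteed, as argued in \cite{stek2}, by the assumption that $\lambda$ is not a Stekloff eigenvalue), and use the interior construction only to certify the boundedness of $\norm{v_{g_\epsilon}}_{L^2(B)}$. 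Verifying the boundedness of $\mathcal{B}$ and the Herglotz density in the graph norm are the remaining technical points, both standard.
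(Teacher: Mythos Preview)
Your outline is the right one and matches what the paper indicates (the argument is ``similar to Theorem~\ref{th_determine}'' and is carried out in \cite{stek2}): reduce the modified far field equation to the interior generalized Stekloff problem on $B$, solve it using the two non-eigenvalue hypotheses to obtain a fixed field $w_0$, and then approximate an associated free Maxwell solution by electric Herglotz wave functions. The derivation of the inhomogeneous Stekloff boundary condition for the total medium field is correct, and the factorization $F_M=\mathcal{B}\mathcal{H}$ is the natural one.

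The gap is precisely where you flag it but then leave it open. You call the existence of $v_0$ the ``main obstacle'' and do not resolve it; yet this is the step where the construction actually happens and it \emph{can} be carried out. Since $N=I$ in $B\setminus\overline{D}$, the field $w_0$ satisfies the free Maxwell system there, and the Stratton--Chu representation on any closed surface between $\partial D$ and $\partial B$ splits $w_0|_{B\setminus\overline{D}}$ uniquely as $v_0+u_0$, with $v_0$ extending to a free Maxwell solution in all of $B$ and $u_0$ extending to a radiating solution in $\mathbb{R}^3\setminus\overline{D}$. A direct check (using $\curl\curl w_0-k^2Nw_0=0$ in $D$) shows that $v_0+u_0$, glued to $w_0$ in $D$, is exactly the total medium field with incident field $v_0$; hence $\mathcal{B}v_0=E_{e,\infty}(\cdot,z;q)$ and your Herglotz approximation argument goes through.

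Your proposed fallback does \emph{not} work and should be dropped. Choosing $g_\epsilon$ merely from the dense range of $F_M$ severs the link between $g_\epsilon$ and the interior solution $w_0$: since $F_M$ is compact, $\Vert F_Mg_\epsilon-E_{e,\infty}\Vert\to 0$ gives no control whatsoever on $\Vert g_\epsilon\Vert$ or $\Vert v_{g_\epsilon}\Vert_{L^2(B)}$, and the interior construction cannot ``certify'' boundedness of a sequence it did not produce. The boundedness statement in the theorem is exactly what distinguishes the non-eigenvalue case from the eigenvalue case, so it must come from the constructive route through $v_0$ (or, equivalently, from approximating $w_0$ by total fields $E[v_{g_\epsilon}]|_B$ and using the Lippmann--Schwinger relation $v=(I-T_N)E$ to bound $\Vert v_{g_\epsilon}\Vert_{L^2(B)}$ by $\Vert E[v_{g_\epsilon}]\Vert_{L^2(B)}$).
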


Conversely we can show that if $\lambda$ is a Stekloff eigenvalue, $\Vert v_{g_{\epsilon}}\Vert_{L^2_t(B)}$
cannot remain bounded as $\epsilon\to 0$ for almost every $z\in D$.
These results suggest that a graph of $\Vert v_{g_{\epsilon}}\Vert_{L^2_t(\mathbb{S}^2)}$  against $\lambda$ will
show peaks at the Stekloff eigenvalues (provided we sample several points $z\in D$).
In practice we do not use $\Vert v_{g_{\epsilon}}\Vert_{L^2_t(\mathbb{S}^2)}$ to detect eigenvalues because it is 
somewhat expensive to compute (we would replace $g_\epsilon$ by $g_{z,q,\alpha}$).  Instead we use as a surrogate $\Vert g_{z,q,\alpha}\Vert_{L^2_t(\mathbb{S}^2)}$.
\section{Numerical Examples}
Numerous examples of the computation of transmission eigenvalues exist in the literature (c.f.~\cite{cakoni_colton_monk}) and so we will not present more here.  Instead we will focus on the two sets of eigenvalues discussed in this paper that
can be computed at a single frequency: 1) eigenvalues of the electric far field operator and 2) generalized Stekloff eigenvalues.

Our numerical examples are all computed using synthetic far field data.  This data is computed using the Netgen~\cite{netgen} finite element library using second order edge elements and a fifth order approximation to curved surfaces.  We use a spherical Perfectly Matched Layer, at a distance of half a wavelength from the circumscribing sphere for $B$, of thickness one quarter of a wavelength.  The PML parameter is chosen to give approximately 0.6\% relative error in the computed far field pattern for scattering by a penetrable sphere of unit radius
(measured in the $L^2$ norm).  In all the calculations the wave number is chosen to be $k=1$ so the wavelength in free space is $2\pi$. 

The far field pattern $F_S$ of the generalized Stekloff scattering problem needed for the solution of (\ref{FMeq}) is computed by the same code with the addition of the calculation of an approximation to the operator $S$ computed using third order finite elements in $H^1(\partial B)$.  Generalized Stekloff eigenvalues for arbitrary structures are computed using the same finite elements but now on
a bounded domain as described in \cite{stek2}.

The far field operators are discretized by quadrature on the unit sphere.  We use a finite element grid on the unit sphere having 99 nodes (made by netgen) and use vertex based quadrature on each element to calculate the weights for each
vertex value of the far field pattern.

Two domains are considered for the scatterer.  The first is the unit cube, and the second is the (hockey) puck which is a circular cylinder of radius 3/2 and unit height centered at the origin.  The latter scatterer has been suggested as a good
experimental model, being dielectric and which can easily be damaged by drilling out portions.  Experimental results are
not considered here.

\subsection{Eigenvalues of the Far Field Operator}

In this section we investigate the use of eigenvalues of the electric far field operator as a target signature. Due to the ease of computing such eigenvalues, they seem to be a natural choice for this purpose, but a significant drawback is the lack of theory concerning their response to changes in the material parameters of an inhomogeneous medium. Thus, our study is confined to a collection of numerical examples, and to facilitate a direct comparison we perform the same numerical experiments as we will for Stekloff eigenvalues. In order to compute the eigenvalues of the electric far field operator $F_e$, we first discretize the operator using quadrature to obtain a matrix $A$. When we investigate the effect of noisy data, we obtain a noisy far field matrix $A^\varepsilon$ by multiplying each component of the far field data by $1+\varepsilon\frac{\zeta + i\mu}{\sqrt{2}}$, where $\varepsilon>0$ is a fixed parameter and $\zeta,\mu$ are both uniformly distributed random numbers in $[-1,1]$ computed using the \texttt{rand} command in \texttt{MATLAB}. The eigenvalues of $A^\varepsilon$ are then computed using the \texttt{eig} command in \texttt{MATLAB}. In Figure \ref{FFeigs} we see that the eigenvalues of the far field operator for both the unit cube and the puck lie on the circle $\abs{\lambda+2\pi} = 2\pi$ as implied by Theorem \ref{th_effo_eigs}.

\begin{figure}
\begin{subfigure}{.5\textwidth}
\centering
\includegraphics[width=1\linewidth]{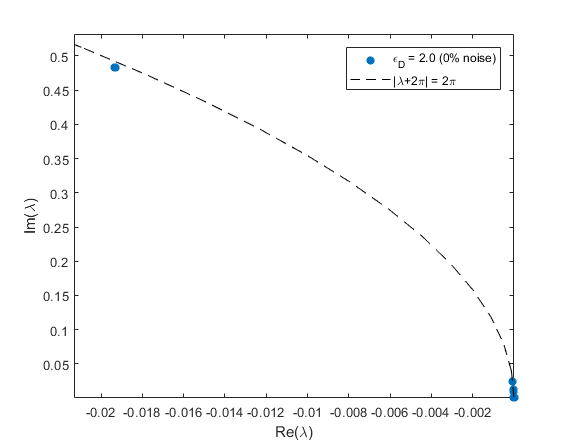}
\caption{unit cube}
\label{FFeigs_CS}
\end{subfigure}
\begin{subfigure}{.5\textwidth}
\centering
\includegraphics[width=1\linewidth]{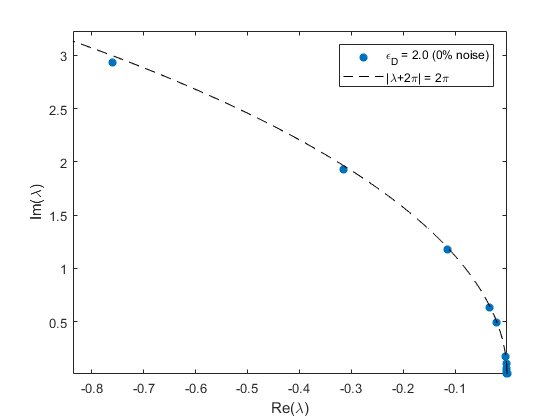}
\caption{puck}
\label{FFeigs_puckS}
\end{subfigure}
\caption{The computed eigenvalues of the electric far field operator with $\epsilon_D = 2$ and no noise. The eigenvalues lie on the circle $\abs{\lambda+2\pi}=2\pi$ and appear to converge to zero as predicted.}
\label{FFeigs}
\end{figure}

An important property of a target signature is that it is stable in the presence of noise. In Figure \ref{FFeigs_noise} we plot the eigenvalues of the far field operator for both the unit cube and puck with $\epsilon_D = 2$ for different amounts of noise, and in Figure \ref{FFeigs_complex_noise} we perform the same test with $\epsilon_D = 2+2i$. In the presence of absorption (complex $\epsilon_D$) the eigenvalues move inside the  circle $\abs{\lambda+2\pi} = 2\pi$.

We remark that although the eigenvalues near the origin are highly sensitive to noise, the eigenvalues with larger magnitude tend to remain localized. This stability is promising, and the distribution of the eigenvalues near the origin may even provide some measure of the noise level.

\begin{figure}
\begin{subfigure}{.5\textwidth}
\includegraphics[width=1\linewidth]{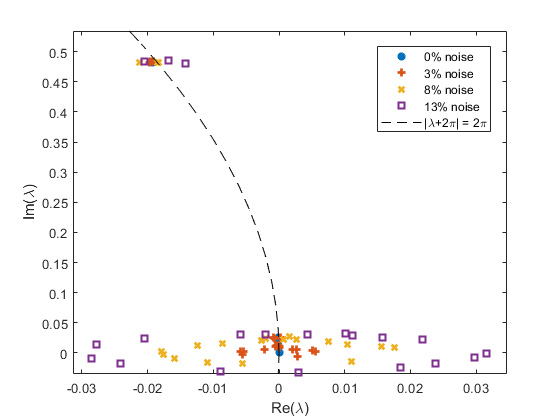}
\caption{unit cube}
\end{subfigure}
\begin{subfigure}{.5\textwidth}
\includegraphics[width=1\linewidth]{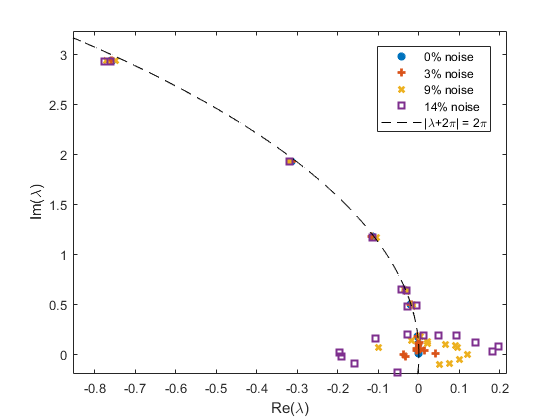}
\caption{puck}
\end{subfigure}
\caption{The computed eigenvalues of the electric far field operator with $\epsilon_D = 2$ and various levels of noise. The eigenvalues of larger magnitude remain stable in the presence of noise, whereas those near the origin are highly unstable.}
\label{FFeigs_noise}
\end{figure}

\begin{figure}
\begin{subfigure}{.5\textwidth}
\includegraphics[width=1\linewidth]{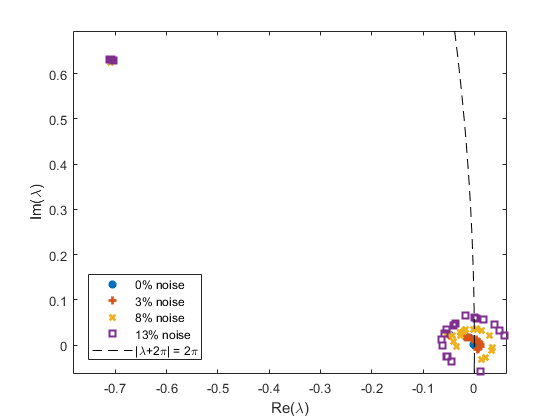}
\caption{unit cube}
\end{subfigure}
\begin{subfigure}{.5\textwidth}
\includegraphics[width=1\linewidth]{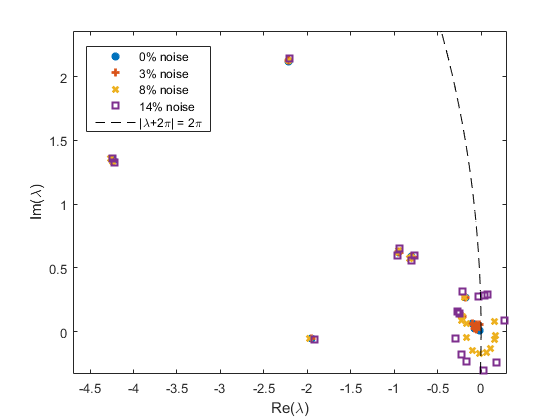}
\caption{puck}
\end{subfigure}
\caption{The computed eigenvalues of the electric far field operator with $\epsilon_D = 2+2i$ and various levels of noise. The eigenvalues of larger magnitude remain stable in the presence of noise, whereas those near the origin are highly unstable.}
\label{FFeigs_complex_noise}
\end{figure}

Of course, our primary point of inquiry is whether the eigenvalues of the far field operator reliably shift due to a change in an inhomogeneous medium. In Figure \ref{FFeigs_shift} we plot the eigenvalues corresponding to $\epsilon_D = 2$ and $\epsilon_D = 2.5$ for both the unit cube and puck. We remark that the eigenvalues with larger magnitude exhibit a noticeable shift due to this change, which are precisely the eigenvalues that remained stable in the presence of noise in our previous test.

\begin{figure}
\begin{subfigure}{.5\textwidth}
\includegraphics[width=1\linewidth]{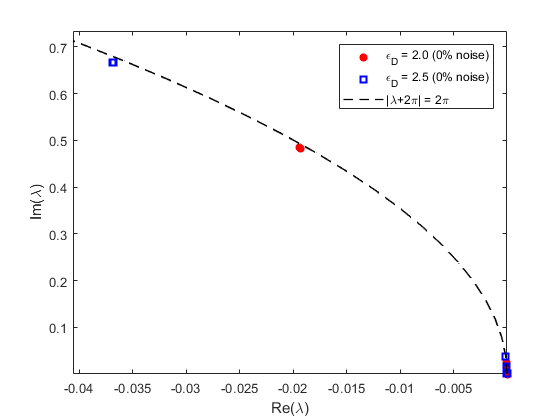}
\caption{unit cube}
\end{subfigure}
\begin{subfigure}{.5\textwidth}
\includegraphics[width=1\linewidth]{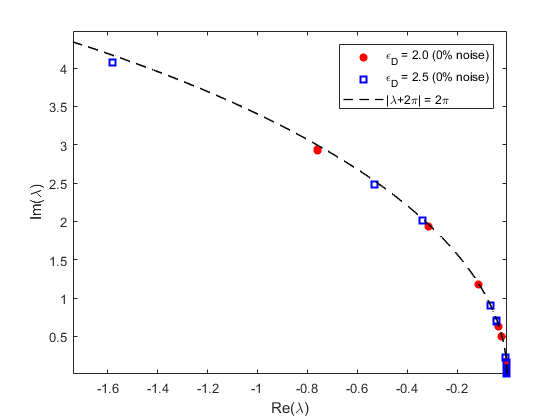}
\caption{puck}
\end{subfigure}
\caption{The computed eigenvalues of the electric far field operator with $\epsilon_D = 2$ and $\epsilon_D = 2.5$, where no noise has been added. The eigenvalues shift due to the overall change in $\epsilon_D$, and a greater shift is exhibited by eigenvalues of larger magnitude.}
\label{FFeigs_shift}
\end{figure}

\subsection{Stekloff Eigenvalues}

We now perform numerical tests for generalized Stekloff eigenvalues. In order to compute an approximate solution to the electric far field equation \eqref{FMeq}, we use the same matrix $A$ described for the computation of eigenvalues of the electric far field operator, and we add noise in the same manner. We first comment on the choice of the domain $B$ for both the unit cube and puck. The only requirement is that each scatterer is contained in $B$, but a natural choice is to choose $B$ to be a ball centered at the origin. We remark that when we solve the far field equation for each sampled value of $\lambda$, we do so for 10 randomly chosen $z$ in a ball (of radius 1/4 for the cube and 1/3 for the puck) contained inside $D$ and average the norms of the solutions to serve as our indicator function. In Figures \ref{Stekloff_D} and \ref{Stekloff} we plot the average norm of $g$, the solution obtained from applying Tikhonov regularization to \eqref{FMeq}, against the Stekloff parameter $\lambda$ for the cases in which $B=D$ and $B$ is a ball, respectively. We see that the peaks in the plot approximate the first couple of eigenvalues well for both the unit cube and the puck when $B$ is chosen to be a ball, but it is difficult to detect any eigenvalues reliably when $B=D$.

\begin{figure}
\begin{subfigure}{.5\textwidth}
\centering
\includegraphics[width=1\linewidth]{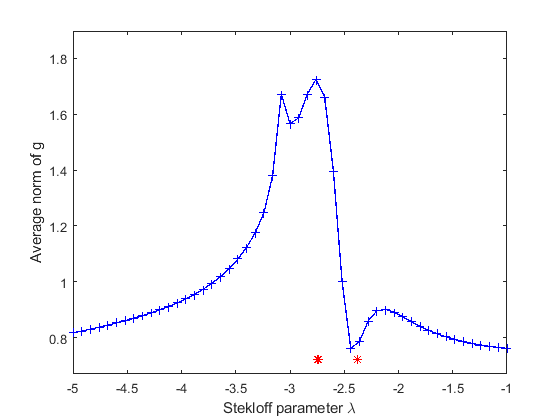}
\caption{unit cube, $B=D$}
\label{Stekloff_C}
\end{subfigure}
\begin{subfigure}{.5\textwidth}
\centering
\includegraphics[width=1\linewidth]{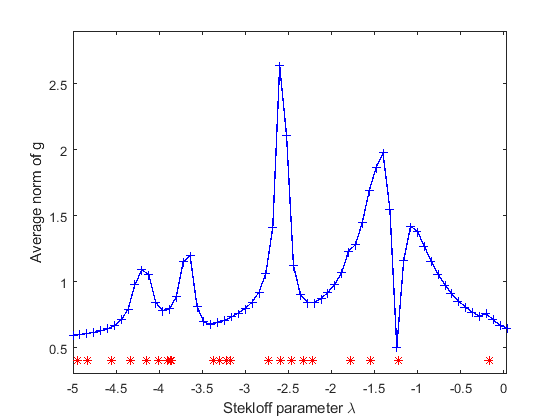}
\caption{puck, $B=D$}
\label{Stekloff_puck}
\end{subfigure}
\caption{A plot of the average norm of $g$ against the Stekloff parameter $\lambda$ with $\epsilon_D = 2.0$ and no noise, where $B=D$. The stars represent the exact eigenvalues computed using finite elements. We observe the difficulty in reliably detecting any eigenvalues.}
\label{Stekloff_D}
\end{figure}

\begin{figure}
\begin{subfigure}{.5\textwidth}
\centering
\includegraphics[width=1\linewidth]{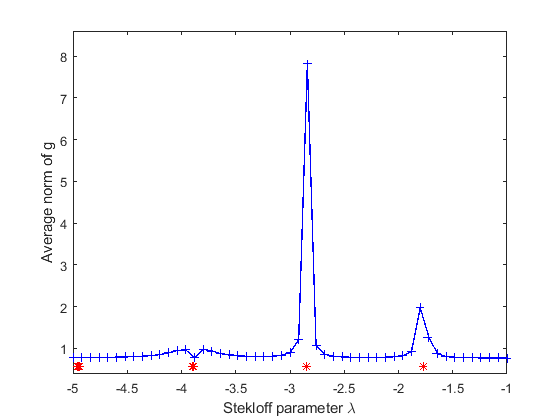}
\caption{unit cube, $B$ is the unit ball}
\label{Stekloff_CS}
\end{subfigure}
\begin{subfigure}{.5\textwidth}
\centering
\includegraphics[width=1\linewidth]{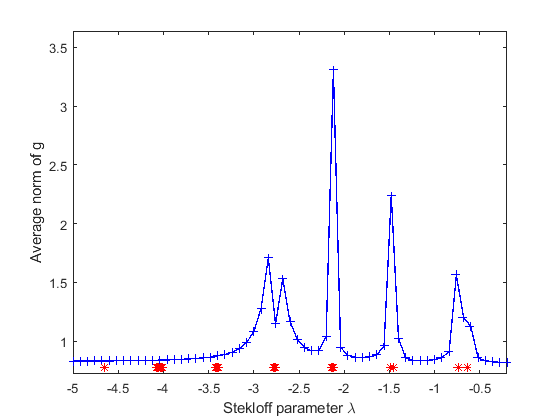}
\caption{puck, $B$ is the ball of a radius 1.7}
\label{Stekloff_puckS}
\end{subfigure}
\caption{A plot of the average norm of $g$ against the Stekloff parameter $\lambda$ with $\epsilon_D = 2.0$ and no noise, where $B$ is chosen to be a ball centered at the origin. The stars represent the exact eigenvalues computed using finite elements. We observe that the first couple of eigenvalues are detected in each case.}
\label{Stekloff}
\end{figure}

In Figures \ref{Stekloff_D_noise} and \ref{Stekloff_noise} we provide the same plots as in Figures \ref{Stekloff_D} and \ref{Stekloff}, respectively, for various levels of noise. For the case $B=D$, the plot for the cube exhibits a peak in the presence of noise which does not coincide with any of the eigenvalues, and a similar peak appears in the plot for the puck near the eigenvalue of smallest magnitude. For the case $B\neq D$, we observe that only a couple of the smallest eigenvalues in magnitude remain detectable in the presence of noise for both the unit cube and the puck, and the noise seems to reduce the prominence of the peaks rather than shift them.

\begin{figure}
\begin{subfigure}{.5\textwidth}
\centering
\includegraphics[width=1\linewidth]{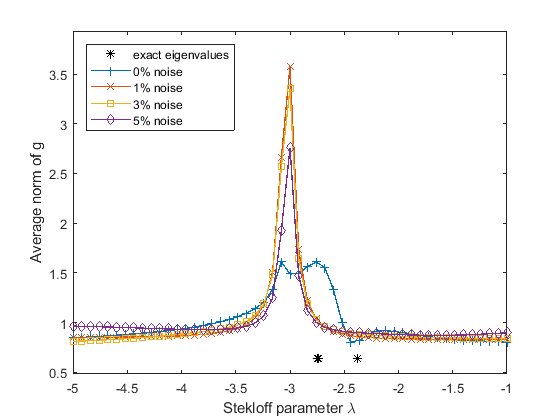}
\caption{unit cube, $B=D$}
\label{Stekloff_C_noise}
\end{subfigure}
\begin{subfigure}{.5\textwidth}
\centering
\includegraphics[width=1\linewidth]{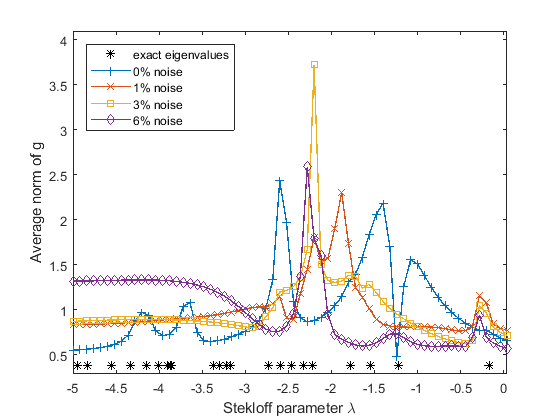}
\caption{puck, $B=D$}
\label{Stekloff_puck_noise}
\end{subfigure}
\caption{A plot of the average norm of $g$ against the Stekloff parameter $\lambda$ with $\epsilon_D = 2.0$ and $B=D$ for various levels of noise. The stars represent the exact eigenvalues computed using finite elements. Though some prominent peaks appear in the presence of noise for both scatterers, they do not correspond reliably to any of the eigenvalues.}
\label{Stekloff_D_noise}
\end{figure}

\begin{figure}
\begin{subfigure}{.5\textwidth}
\centering
\includegraphics[width=1\linewidth]{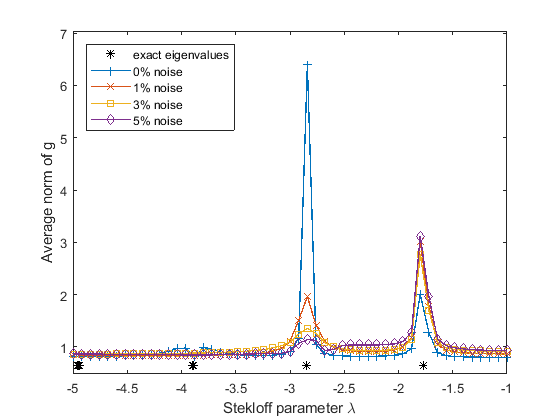}
\caption{unit cube, $B$ is the unit ball}
\label{Stekloff_CS_noise}
\end{subfigure}
\begin{subfigure}{.5\textwidth}
\centering
\includegraphics[width=1\linewidth]{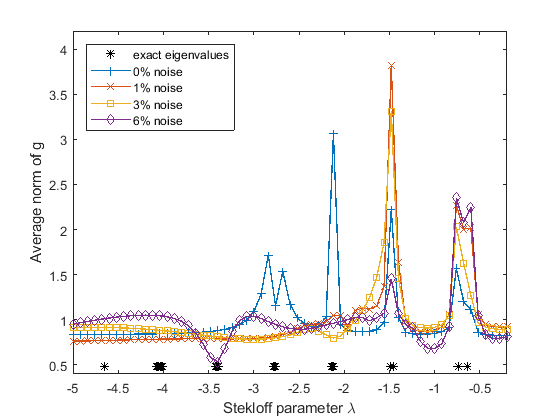}
\caption{puck, $B$ is the ball of a radius 1.7}
\label{Stekloff_puckS_noise}
\end{subfigure}
\caption{A plot of the average norm of $g$ against the Stekloff parameter $\lambda$ with $\epsilon_D = 2.0$ and $B\neq D$ for various levels of noise. The stars represent the exact eigenvalues computed using finite elements. Only a couple of eigenvalues remain detectable in the presence of noise.}
\label{Stekloff_noise}
\end{figure}

In Figures \ref{Stekloff_D_shift0} and \ref{Stekloff_shift0} we investigate the shift of generalized Stekloff eigenvalues due to an overall change in $\epsilon_D$ from 2 to 2.5. For the case $B=D$ we see that the exact eigenvalues shift and that there is some difference in the plot of the average norm of $g$, but since these two do not correspond well, it is difficult to make any definite conclusions about their usefulness in detecting changes in $\epsilon_D$. The case $B\neq D$ displays a reduced sensitivity in the eigenvalues, with only the smallest eigenvalues for the puck exhibiting any noticeable shift. However, this choice of $B$ improves the ability to detect eigenvalues and consequently this shift may be seen in the peaks of the plot of the average norm of $g$.

\begin{figure}
\begin{subfigure}{.5\textwidth}
\centering
\includegraphics[width=1\linewidth]{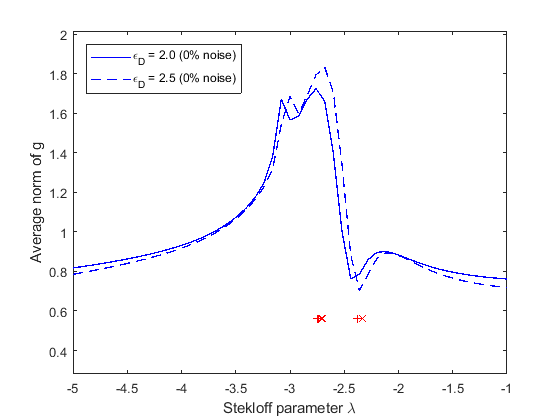}
\caption{unit cube, $B=D$}
\label{Stekloff_C_shift0}
\end{subfigure}
\begin{subfigure}{.5\textwidth}
\centering
\includegraphics[width=1\linewidth]{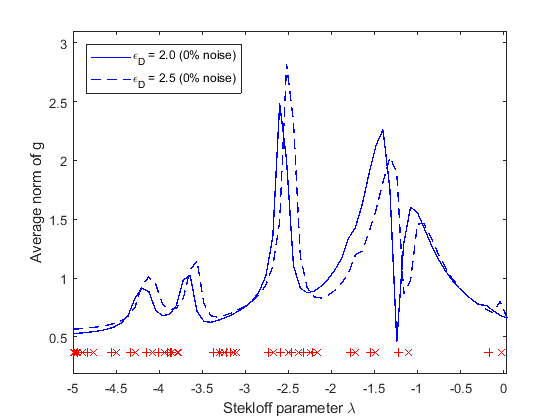}
\caption{puck, $B=D$}
\label{Stekloff_puck_shift0}
\end{subfigure}
\caption{A plot of the average norm of $g$ against the Stekloff parameter $\lambda$ with $\epsilon_D = 2.0, 2.5$ and no noise. The symbols `$+$' and `$\times$' represent the exact eigenvalues computed using finite elements for $\epsilon_D = 2.0$ and $\epsilon_D = 2.5$, respectively. The exact eigenvalues clearly shift and there is some difference in the plot of the indicator function due to the overall change in $\epsilon_D$.}
\label{Stekloff_D_shift0}
\end{figure}

\begin{figure}
\begin{subfigure}{.5\textwidth}
\centering
\includegraphics[width=1\linewidth]{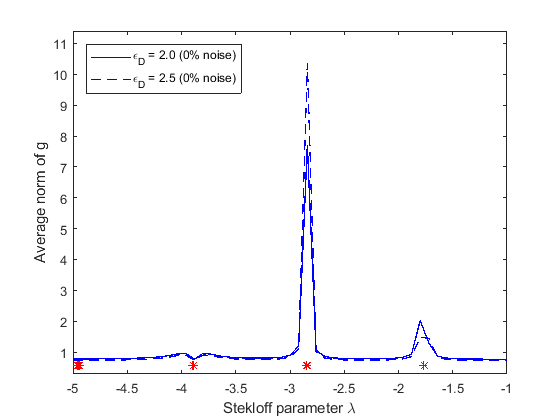}
\caption{unit cube, $B$ is the unit ball}
\label{Stekloff_CS_shift0}
\end{subfigure}
\begin{subfigure}{.5\textwidth}
\centering
\includegraphics[width=1\linewidth]{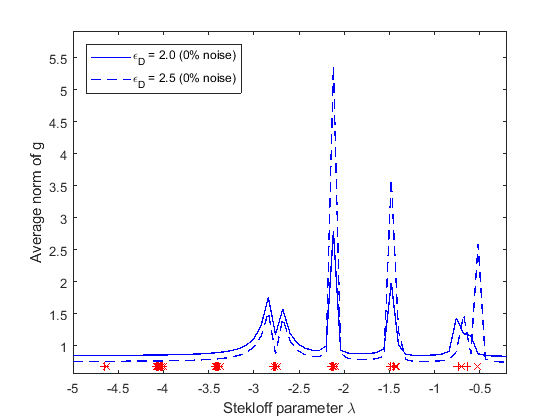}
\caption{puck, $B$ is the ball of a radius 1.7}
\label{Stekloff_puckS_shift0}
\end{subfigure}
\caption{A plot of the average norm of $g$ against the Stekloff parameter $\lambda$ with $\epsilon_D = 2.0, 2.5$ and no noise. The symbols `$+$' and `$\times$' represent the exact eigenvalues computed using finite elements for $\epsilon_D = 2.0$ and $\epsilon_D = 2.5$, respectively. We observe no noticeable shift in the eigenvalues for the unit cube, but we do observe a shift in the smallest eigenvalues for the puck.}
\label{Stekloff_shift0}
\end{figure}

The perturbation estimate \eqref{shift_estimate} suggests that the shift of a Stekloff eigenvalue due to a change in $\epsilon_D$ is related to the magnitude of a corresponding eigenfunction in a neighborhood of the change, and in Figures \ref{cube_plot} and \ref{puck_plot} we plot a cross section of an eigenfunction corresponding to the cube and puck, respectively. In Figure \ref{cube_eigenfunction} we see that $D$ is disjoint from the regions in which the eigenfunction $w$ is greatest, which suggests that an overall change in $\epsilon_D$ for the unit cube will not result in a large shift in the corresponding eigenvalue, as we observed. In contrast, we see in Figure \ref{puck_eigenfunction} that $D$ intersects with the regions of large magnitude of $w$ and explains the observed shift of the corresponding eigenvalue for the puck in Figure \ref{Stekloff_shift0}. Though precise knowledge of the geometry and material properties of the scatterer must be known in order to take advantage of this information, this relationship between the eigenfunctions and the material properties may be highly useful in nondestructive testing of materials. In particular, it might allow for the localization of flaws in a material by observing which eigenvalues shift and which do not.

\begin{figure}
\begin{subfigure}{.5\textwidth}
\centering
\includegraphics[width=.75\linewidth]{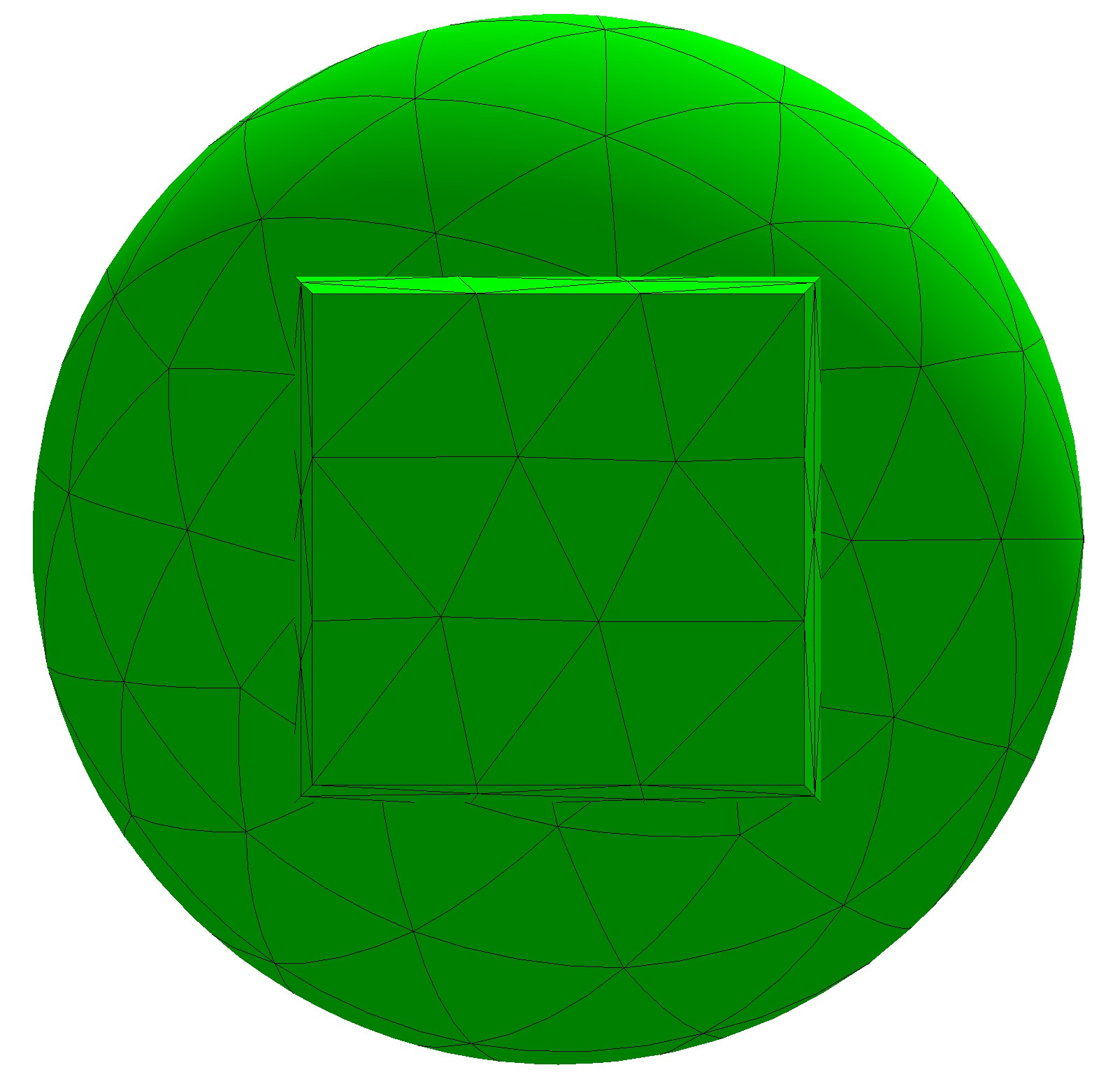}
\caption{Cross section of unit cube}
\label{cube_mesh}
\end{subfigure}
\begin{subfigure}{.5\textwidth}
\centering
\includegraphics[width=1\linewidth]{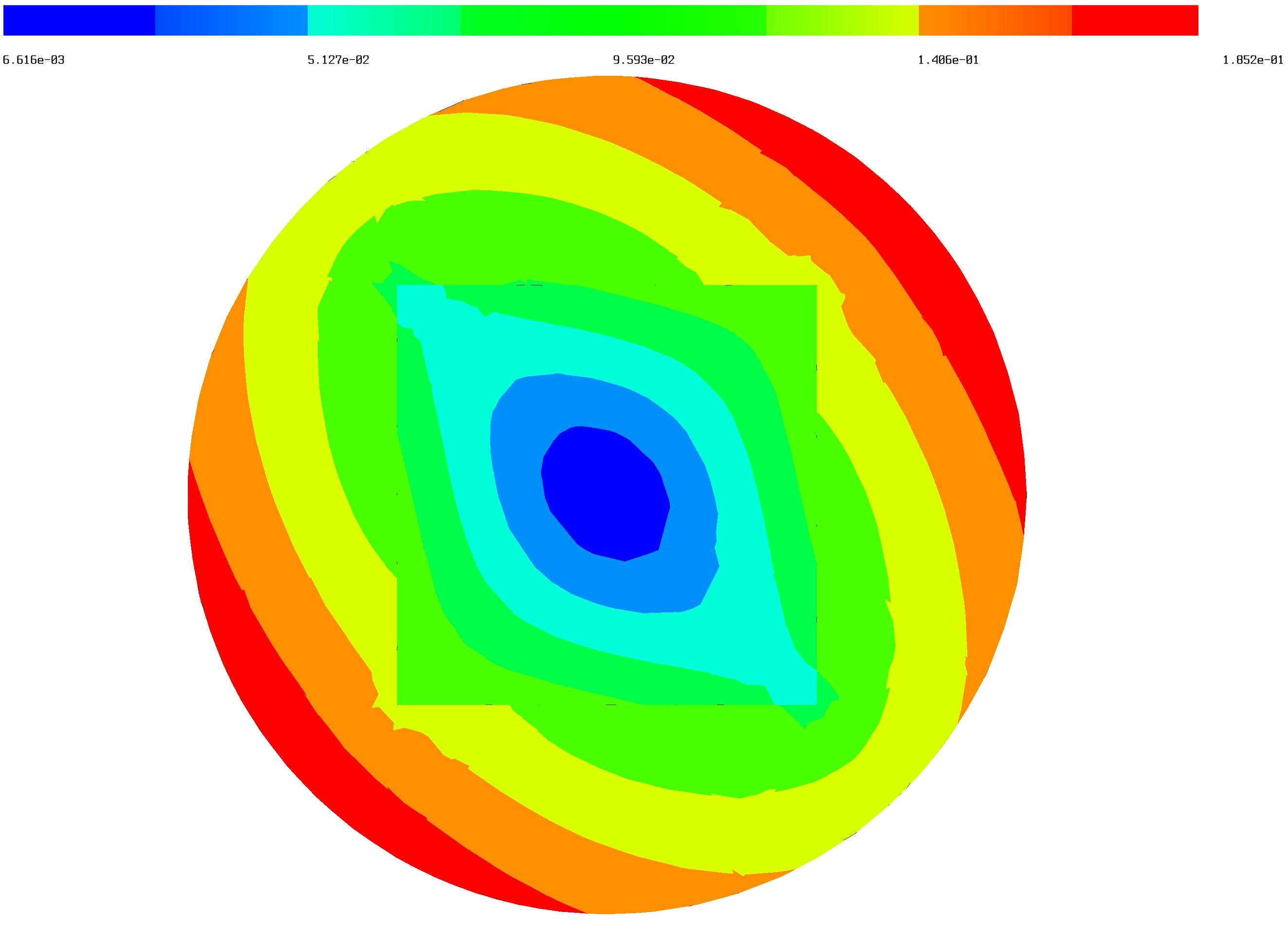}
\caption{Cross section of eigenfunction}
\label{cube_eigenfunction}
\end{subfigure}
\caption{A cross section of the unit cube surrounded by a ball and the corresponding cross section of an eigenfunction.}
\label{cube_plot}
\end{figure}

\begin{figure}
\begin{subfigure}{.5\textwidth}
\centering
\includegraphics[width=.75\linewidth]{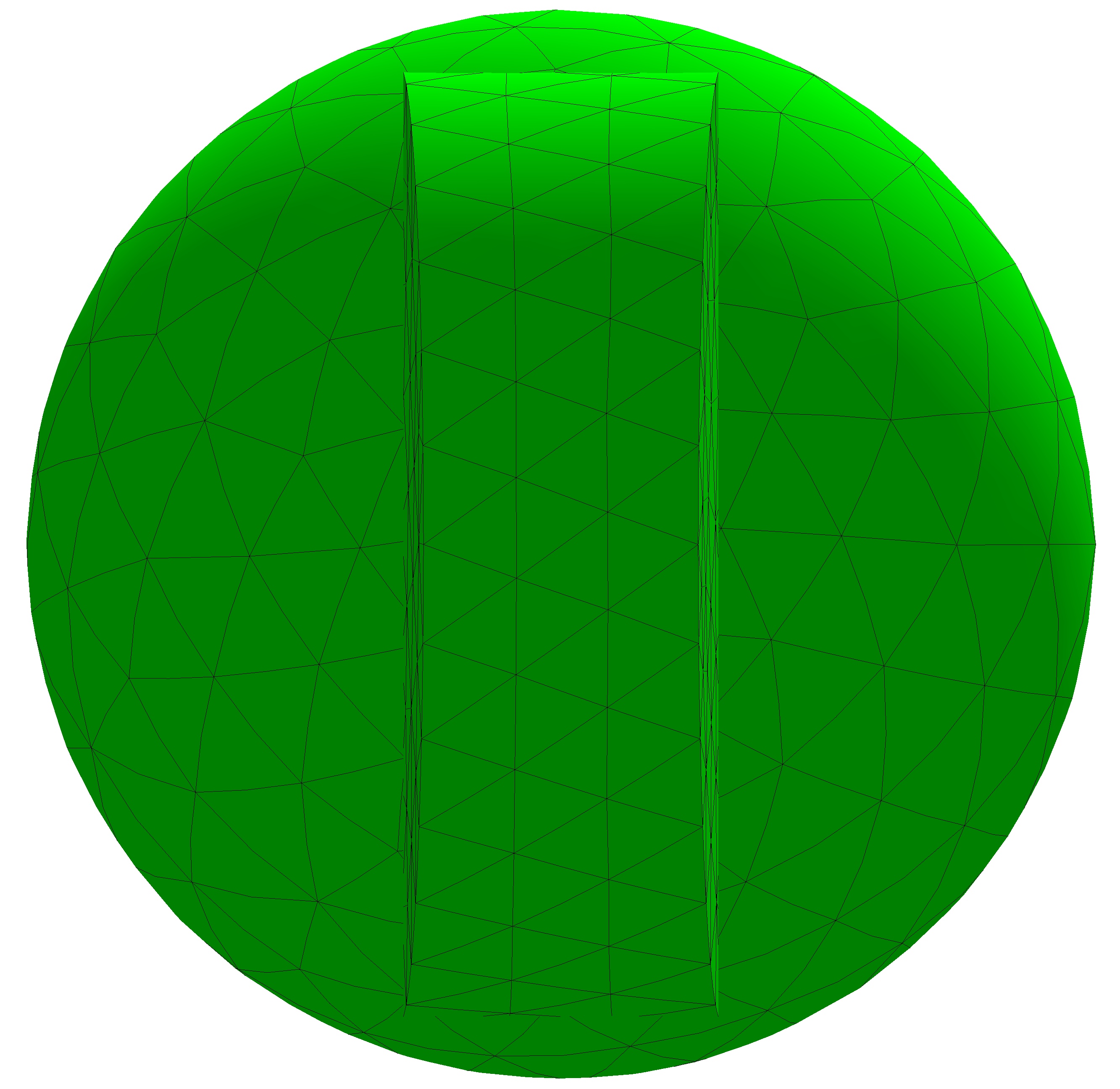}
\caption{Cross section of puck}
\label{puck_mesh}
\end{subfigure}
\begin{subfigure}{.5\textwidth}
\centering
\includegraphics[width=1\linewidth]{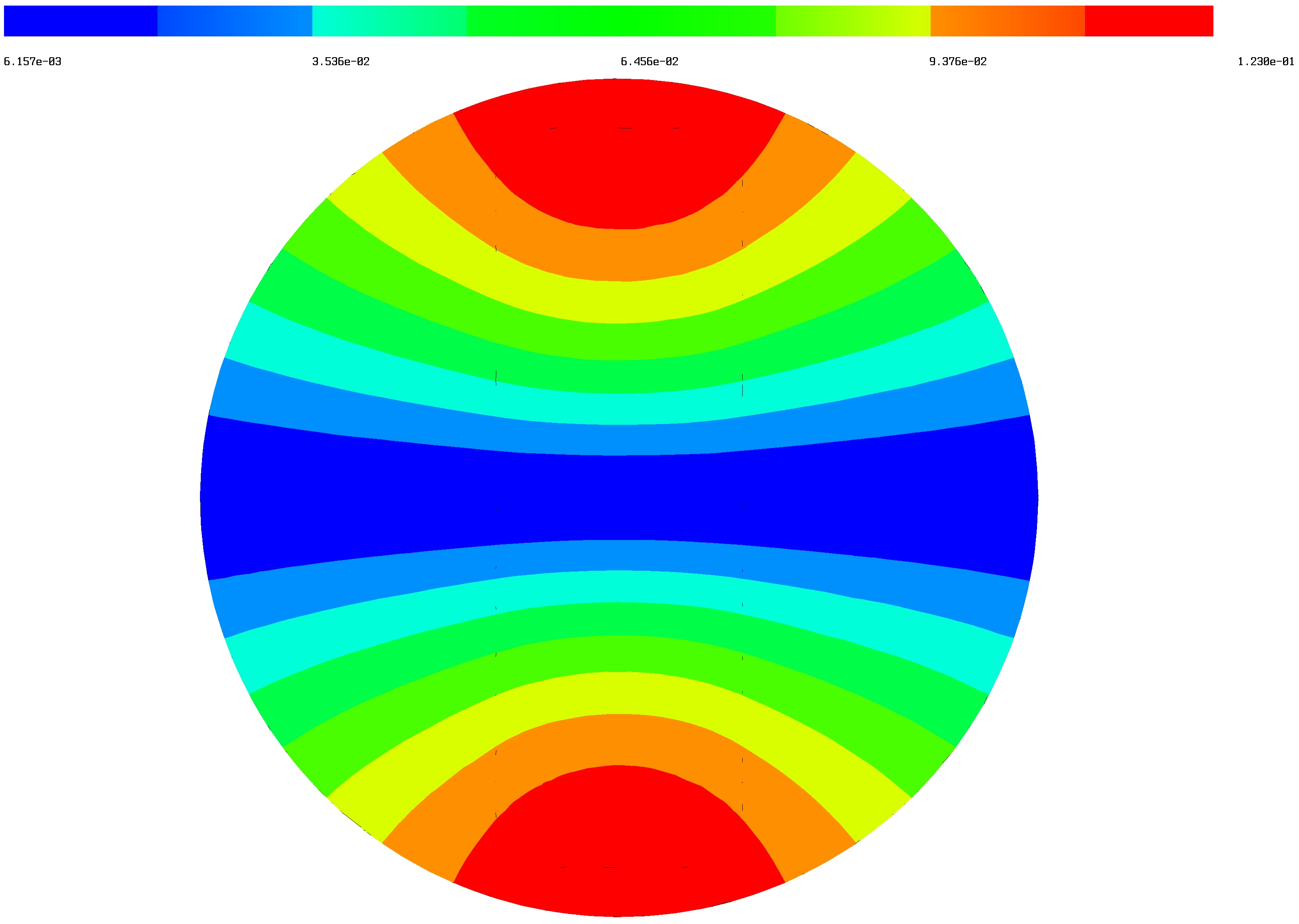}
\caption{Cross section of eigenfunction}
\label{puck_eigenfunction}
\end{subfigure}
\caption{A cross section of the puck surrounded by a ball and the corresponding cross section of an eigenfunction.}
\label{puck_plot}
\end{figure}

An important advantage of Stekloff eigenvalues over transmission eigenvalues is that Stekloff eigenvalues may in principle be computed for absorbing media, i.e. when $\epsilon_D$ has a nonzero imaginary part. Though the present theory does not include a proof of existence of electromagnetic Stekloff eigenvalues in this case, in Figures \ref{Stekloff_CS_complex} and \ref{Stekloff_puckS_complex} we present an example of their computation for the unit cube and the puck when $\epsilon_D = 2+2i$ and $B$ is chosen to be a ball. In these examples we have paired the plot for each scatterer with its noisy counterpart in order to obtain a more direct measure of the effect of noise. We observe that all of the eigenvalues in this sampling region are detected when no noise is present, and one remains detectable to a reasonable degree of accuracy in the presence of 7\% noise. It should be noted that the computational expense is greatly increased by the necessity to sample in a region of the complex plane rather than in an interval on the real line. However, as in the previous examples for real $\epsilon_D$, the computation of the modified Stekloff problems may be performed ahead of time for a given region $B$ and applied to any case in which $D\subseteq B$.

\begin{figure}
\begin{subfigure}{.5\textwidth}
\centering
\includegraphics[width=1\linewidth]{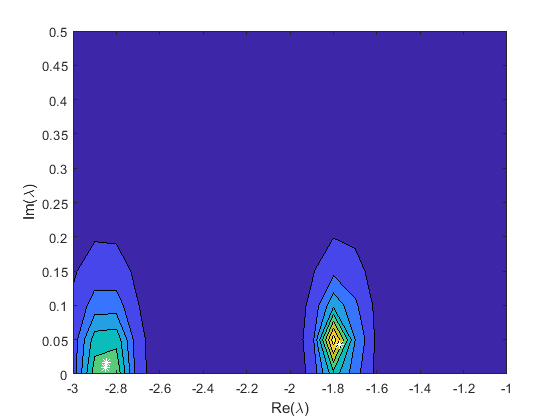}
\caption{unit cube, no noise}
\label{Stekloff_CS_complex_noise0}
\end{subfigure}
\begin{subfigure}{.5\textwidth}
\centering
\includegraphics[width=1\linewidth]{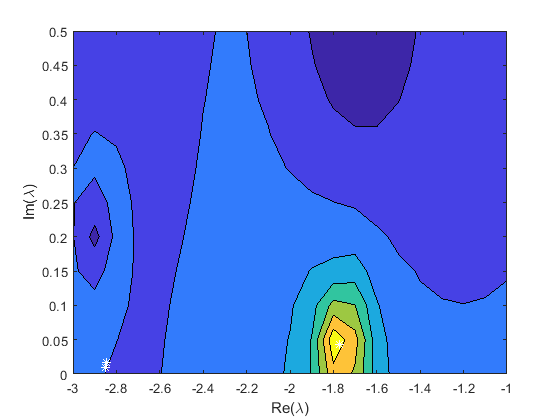}
\caption{unit cube, 7\% noise}
\label{Stekloff_CS_complex_noise7}
\end{subfigure}
\caption{A base 10 contour plot of the average norm of $g$ against the Stekloff parameter $\lambda$ in the complex plane for the unit cube with $\epsilon_D = 2+2i$ and two different noise levels. Here we choose $B$ to be the unit ball. The white stars represent the exact eigenvalues computed using finite elements. We observe that all of the eigenvalues in this region are detected when no noise is present, and one remains detectable with 7\% noise.}
\label{Stekloff_CS_complex}
\end{figure}

\begin{figure}
\begin{subfigure}{.5\textwidth}
\centering
\includegraphics[width=1\linewidth]{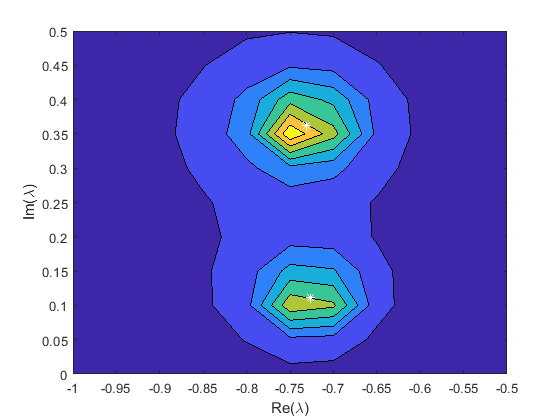}
\caption{puck, no noise}
\label{Stekloff_puckS_complex_noise0}
\end{subfigure}
\begin{subfigure}{.5\textwidth}
\centering
\includegraphics[width=1\linewidth]{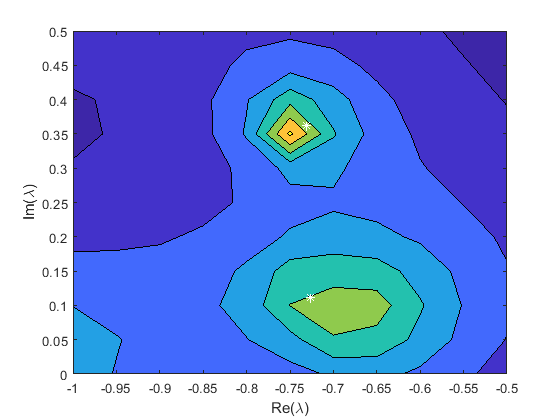}
\caption{puck, 7\% noise}
\label{Stekloff_puckS_complex_noise7}
\end{subfigure}
\caption{A base 10 contour plot of the average norm of $g$ against the Stekloff parameter $\lambda$ in the complex plane for the puck with $\epsilon_D = 2+2i$ and two different noise levels. Here we choose $B$ to be the unit ball. The white stars represent the exact eigenvalues computed using finite elements. We observe that all of the eigenvalues in this region are detected when no noise is present, and one remains detectable with 7\% noise.}
\label{Stekloff_puckS_complex}
\end{figure}

\section{Conclusion and Open Problems}

The fact that the electric far field data does not uniquely determine the material properties of an anisotropic medium presents many difficulties in the detection of changes in the material properties of a medium, and we have seen that various approaches using the idea of a target signature are available. An important question is which of these target signatures should be chosen for a given application, and unfortunately the answer is not entirely straightforward. Although the theory of transmission eigenvalues is applicable to dielectric media, the use of target signatures for absorbing media at this time rests with either the eigenvalues of the electric far field operator or generalized Stekloff eigenvalues, a pair with complementary strengths and weaknesses. On one hand, we have observed a noticeable shift in the eigenvalues of the electric far field operator due to an overall change in $\epsilon_D$, whereas Stekloff eigenvalues do not appear to shift as reliably. On the other hand, the relationship between Stekloff eigenvalues and the permittivity $\epsilon_D$ is apparent in the variational formulation and lends itself to investigation by standard techniques in the theory of partial differential equations, whereas little is known about the eigenvalues of the electric far field operator beyond their distribution in the complex plane. In addition, the use of Stekloff eigenvalues requires some decision-making on the choice of $B$: choosing $B=D$ often improves sensitivity at the expense of reliable detection of eigenvalues, and choosing $B\neq D$ improves the detection of eigenvalues while reducing their sensitivity to changes in the medium. Thus, any attempt to use these methods would require some experimentation to determine the best choice, and there are multiple trade-offs to consider. \par

However, the story likely does not end with this rather disappointing observation, as these are not the only target signatures under current study. In particular, there are a number of possible ways in which the electric far field operator can be modified. An example in acoustic scattering modifies the far field operator with that corresponding to scattering by an auxiliary homogeneous medium, and the eigenparameter of interest $\eta$ is the index of refraction of the auxiliary medium \cite{audibert_cakoni_haddar,cogar_colton_meng_monk}. An important advantage of this method is that the auxiliary scattering problem also depends on an additional parameter $\gamma$ which may be tuned to improve the sensitivity of the eigenvalues to changes in the material properties, thus overcoming the loss of sensitivity resulting from the choice $B\neq D$. \par

In Figure \ref{stekloff_mte} we show a direct comparison between Stekloff eigenvalues and these so-called modified transmission eigenvalues for acoustic scattering of an L-shaped domain, where we have used the recently developed generalized linear sampling method (cf. \cite{audibert_cakoni_haddar}) in order to detect the eigenvalues from far field data. This domain has been used for numerical testing of Stekloff eigenvalues and modified transmission eigenvalues previously (cf. \cite{stek1} and \cite{cogar_colton_meng_monk}, respectively), and we see that the shift in the eigenvalues due to a circular flaw located at $(x_c,y_c) = (0.1,0.4)$ of radius $r_c = 0.05$ is much more pronounced for modified transmission eigenvalues than Stekloff eigenvalues. It should be noted that for the case of Stekloff eigenvalues there exist peaks in the GLSM indicator corresponding to some of the other exact eigenvalues shown, but the height of these peaks is considerably less than the one visible. We remark that the modified transmission eigenvalues correspond to the choice $\gamma = 0.5$ in \cite{cogar_colton_meng_monk} and that instead using $\gamma = 2$ produces poor results. \par

This example indicates that, at least for acoustic scattering and with a proper choice of $\gamma$, modified transmission eigenvalues provide more information about the material properties of the scatterer than Stekloff eigenvalues. This observation is not too surprising, as can be seen from the fact that for spherically stratified media there exists a single Stekloff eigenvalue corresponding to a spherically symmetric eigenfunction, whereas there exist infinitely many such modified transmission eigenvalues. Extending this approach to Maxwell's equations is the focus of our current research.

\begin{figure}
\begin{subfigure}{.5\textwidth}
\centering
\includegraphics[width=1\linewidth]{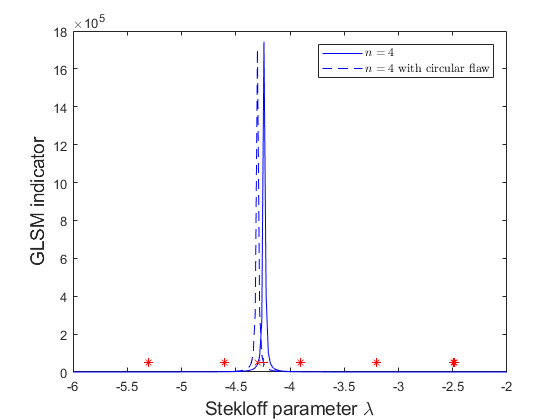}
\caption{Stekloff eigenvalues}
\label{stekloff_Lshape}
\end{subfigure}
\begin{subfigure}{.5\textwidth}
\centering
\includegraphics[width=1\linewidth]{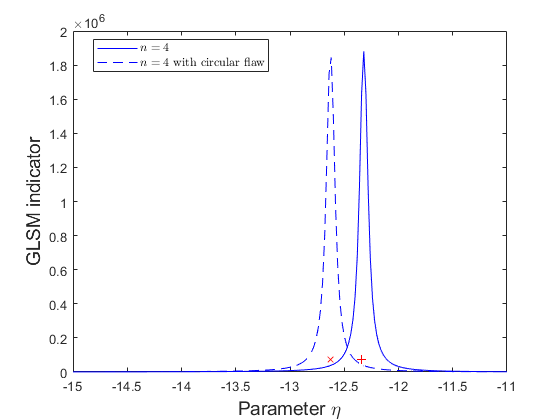}
\caption{Modified transmission eigenvalues}
\label{mte_Lshape}
\end{subfigure}
\caption{A direct comparison of Stekloff eigenvalues and modified transmission eigenvalues (with $\gamma = 0.5$) for acoustic scattering by an L-shaped domain. The shift in the eigenvalues due to a circular flaw located at $(x_c,y_c) = (0.1,0.4)$ of radius $r_c = 0.05$ is much more pronounced for modified transmission eigenvalues than Stekloff eigenvalues. The red '+' symbol represents the exact eigenvalues for the unflawed domain, and the red '$\times$' symbol represents the exact eigenvalues for the domain with a circular flaw.}
\label{stekloff_mte}
\end{figure}

\FloatBarrier

\section*{Acknowledgements}

This material is based upon work supported by the Army Research Office through the National Defense Science and Engineering Graduate (NDSEG) Fellowship, 32 CFR 168a, and by the Air Force Office of Scientific Research under award number FA9550-17-1-0147.

\bibliographystyle{myplain}
\bibliography{survey_references}

\end{document}